\AddEnumerateCounter{\asbuk}{\russian@alph}{щ}
\newcommand*{\hm}[1]{#1\nobreak\discretionary{}%
{\hbox{$\mathsurround=0pt #1$}}{}}
\theoremstyle{theorem}
\newtheorem{theorem}{Theorem}
\newtheorem{lemma}{Lemma}
\newtheorem{defn}{Definition}
\newtheorem{cor}{Corollary}
\newtheorem{stat}{Proposition}
\theoremstyle{definition}
\newtheorem{constr}{Construction}
\theoremstyle{theorem}
\newtheorem{example}{Example}
\theoremstyle{theorem}
\newtheorem{remark}{Remark}
\DeclareMathOperator{\Spec}{Spec}
\DeclareMathOperator{\Hom}{Hom}
\DeclareMathOperator{\SL}{SL}
\DeclareMathOperator{\Aut}{Aut}
\DeclareMathOperator{\Ker}{ker}
\DeclareMathOperator{\im}{im}
\DeclareMathOperator*{\Prod}{\prod}
\DeclareMathOperator{\Quot}{Quot}
\DeclareMathOperator{\Lin}{span}
\DeclareMathOperator{\degg}{\overline{deg}}
\DeclareMathOperator{\deggg}{\widehat{deg}}
\title{HOMOGENEOUS LOCALLY NILPOTENT DERIVATIONS ON TRINOMIAL VARIETIES}
\author{Kirill Rassolov}
\thanks{This article is an output of a research project implemented as part of the Basic Research Program at HSE University.
\\ This is a preprint of the Work accepted for publication in Mathematical Notes, 2025, vol. 118, no.~3-4; https://pleiades.online/}
\email{kirill.rassolov@math.msu.ru}
\address{
HSE University, Faculty of Computer Science, Pokrovsky Boulevard 11, Moscow 109028, \mbox{Russia};
\linebreak
and
\linebreak
Lomonosov Moscow State University, Faculty of Mechanics and Mathematics, Department of Higher Algebra, Leninskie Gory 1, Moscow 119234, Russia
}
\begin{document}
\maketitle

\begin{abstract}
    We consider the finest grading of the algebra of regular functions of a trinomial variety. An explicit description of locally nilpotent derivations that are homogeneous with respect to this grading is obtained.
\end{abstract}

\section{Introduction}
A trinomial variety is an affine algebraic variety given by a system of equations of the form
\begin{equation}
a + bT_{11}^{l_{11}}\cdots T_{1n_1}^{l_{1n_1}} + cT_{21}^{l_{21}}\cdots T_{2n_2}^{l_{2n_2}}=0
\end{equation}
or
\begin{equation}
    aT_{01}^{l_{01}}\cdots T_{0n_0}^{l_{0n_0}} + bT_{11}^{l_{11}}\cdots T_{1n_1}^{l_{1n_1}} + cT_{21}^{l_{21}}\cdots T_{2n_2}^{l_{2n_2}}=0.
\end{equation}
An example of a trinomial variety is provided by the special linear group $\SL_2(\mathbb K)$, given by~$T_{11}T_{12} \hm- T_{21}T_{22} = 1$; see Construction~1 for precise definition.

Trinomial varieties arise from the Cox construction, which defines the total coordinate ring $\mathcal R (X)$ for a normal variety $X$ with a finitely generated divisor class group (see~\cite{Ar} or~\cite{BH}). Recall that the complexity $c(X)$ of an action of a reductive group $G$ on $X$ is defined as the codimension of a generic orbit of the induced action of a Borel subgroup $B\subseteq G$. Given an effective action of an algebraic torus $T$, we have $c(X) = \dim X - \dim T$. In particular, toric varieties correspond to $c(X)=0$. It is well known that the Cox ring of a toric variety is a polynomial ring. It is proved in~\cite{HW} and~\cite{HS} that in the case of an effective torus action of complexity one on $X$, the total coordinate space $\Spec \mathcal R(X)$ is a trinomial variety. As total coordinate spaces trinomial varieties are also investigated in \cite{HH} and \cite{AHHL}.

We work over an algebraically closed field $\mathbb{K}$ of characteristic zero. Let $R$ be a commutative $\mathbb{K}$-domain.
By definition, a \textit{derivation} is a $\mathbb{K}$-linear map $\delta\colon R\to R$ satisfying the Leibniz rule: $\delta(ab) = \delta(a)b + a\delta(b)$ for all $a,b\in R$. A derivation $\delta$ is said to be \textit{locally nilpotent} if for any $a\in R$ there exists $n\in \mathbb Z_{\ge 0}$ such that $\delta^n(a)=0$. Let $R$ be graded by a group $G$; that is to say,
$$
R = \bigoplus_{g\in G} R_g
$$
for some $\mathbb K$-subspaces $R_g\subseteq R$ with $R_g R_h \subseteq R_{g+h}$ for any $g,h\in G$. A derivation $\delta\colon R\to R$ is called {\it homogeneous} if there is $d\in G$ with $\delta (R_g) \subseteq R_{g+d}$ for all $g\in G$. In this case, $d$ is called the \emph{degree} of~$\delta$.

We investigate the case where $R=\mathbb K[X]$ is the algebra of regular functions of a trinomial variety $X$. Since $R$ is a finitely generated commutative algebra, we can suppose that G is a finitely generated abelian group. We consider the finest grading on $R$ such that all coordinate functions are homogeneous. This grading was introduced in~\cite[Construction~1]{HW}. It corresponds to an effective action on $X$ of some quasitorus $H$. Our main result is an explicit description of locally nilpotent derivations that are homogeneous with respect to the finest grading. In geometric terms, any such derivation matches an $H$-normalized action on X of the additive group $\mathbb G_a = (\mathbb K, +)$. In~\cite{GZ} and~\cite{Z}, the desired description was obtained for trinomial hypersurfaces given by an equation of the form~(2). %However, the requirement for a trinomial variety to be a hypersurface corresponds to no natural condition for the variety with torus action of complexity one. 
The present paper generalizes that description to an arbitrary trinomial variety. 

Our description consists of two parts. First, we prove that all suitable irreducible derivations are elementary (the definition is given in Section~3). For the second part, we describe the kernel of an irreducible derivation in Section~6. The resulting description is summarized in Section~6, Corollary~\ref{Cor2}.

As an application, we notice the following interesting phenomenon. Recall that a variety~$X$ that admits no nontrivial $\mathbb G_a$-actions is said to be \textit{rigid}. The action of the quasitorus $H$ induces the action of the torus $T=H^0\curvearrowright X$. The last action corresponds to the grading of $\mathbb K[X]$ by some free abelian group. In this case, it is well known that the existence of $\mathbb G_a$-actions is equivalent to the existence of $T$-normalized $\mathbb G_a$-actions. However, this is not true for the quasitorus $H$. Namely, Corollary~\ref{Cor1} gives a criterion for a non-rigid trinomial variety to have no $H$-normalized $\mathbb G_a$-action.

The author is grateful to Sergey Gaifullin for statement the problem and useful discussions.

\section{Trinomial varieties}
Following~\cite[Construction 1]{HW}, we give a definition of a trinomial variety and the finest grading in constructive way.

\begin{constr}\label{ConstrR}
Set integers $r,n>0$, $m\ge0$ and a partition $n=n_{\iota}+\ldots+n_r$, $n_i>0$. For every $i=\iota,\ldots, n$ fix a tuple $l_i=(l_{i1},\ldots,l_{in_i})\in\mathbb{N}^{n_i}$ and denote the monomial $$T^{l_i}_i = T^{l_{i1}}_{i1}\cdots T^{l_{in_i}}_{in_i}\quad\in\quad\mathbb{K}[T_{ij}, S_k \;| \;\iota\le i\le n,\; 1\le j\le n_i,\; 1\le k\le m].$$ 
The above-mentioned polynomial algebra is further denoted by $\mathbb{K}[T_{ij}, S_k].$
We consider two types of trinomial varieties given by equations of different form.

\textit{Type 1.} Set $\iota=1,\; I=\{1,\ldots,r-1\}$. Fix a tuple $A = (a_1,\ldots,a_r)\in\mathbb{K}^r$ of pairwise different elements of $\mathbb K$. For every $i\in I$ define a polynomial $$\mathfrak g_i=T^{l_i}_i - T^{l_{i+1}}_{i+1} - (a_{i+1}-a_i).$$

Build an $r\times(n+m)$-matrix  consisting of rows $l_1,\ldots,l_r$:
$$
P_0 = \begin{pmatrix}
l_1 & \ldots & 0 & 0 & \ldots & 0\\\
\vdots & \ddots & \vdots & \vdots & & \vdots\\
0 & \ldots & l_r & 0 & \ldots & 0\\
\end{pmatrix}.
$$

\textit{Type 2.} Set $\iota=0,\; I=\{0,\ldots,r-2\}$. Fix a matrix $A=(a_0,\ldots,a_r)$ with pairwise linearly independent columns $a_0,\ldots,a_r\in\mathbb{K}^2$. For every $i\in I$ define a polynomial $$\mathfrak g_i = \det{\begin{pmatrix}
T^{l_i}_i & T^{l_{i+1}}_{i+1} & T^{l_{i+2}}_{i+2}\\
a_i & a_{i+1} & a_{i+2}\end{pmatrix}}.$$

Build an $r\times(n+m)$-matrix, consisting of strings $l_0,\ldots,l_r$:
$$
P_0 = \begin{pmatrix}
-l_0 & l_1 & \ldots & 0 & 0 & \ldots & 0\\
\vdots & \vdots & \ddots & \vdots & \vdots & & \vdots\\
-l_0 & 0 & \ldots & l_r & 0 & \ldots & 0\\
\end{pmatrix}.
$$

A \textit{trinomial variety} $X$ is a subvariety of $\mathbb{A}^{n+m}$ given by a system of equations $\mathfrak g_i = 0, \,i\in I$. It follows from Proposition~\ref{PropHausen} below that the algebra
$$
R(A, P_0) := \mathbb{K}[T_{ij}, S_k]/ \mathfrak{I},\quad\text{where}\quad \mathfrak{I}=\mathfrak I(A, P_0) = (\mathfrak g_i\; |\;i\in I),
$$
has no nilpotents and therefore $R(A, P_0)$ is the algebra of regular functions for $X$. We call $R(A, P_0)$ a \textit{trinomial algebra} of the respective type (1 or 2).

We now define the finest grading of a trinomial algebra. Let $P_0^T$ denote the transpose of~$P_0$. Consider the quotient homomorphism
$$
Q \colon \mathbb{Z}^{n+m} \to K_0 := \mathbb{Z}^{n+m}/\im (P_0^T).
$$
Let $v_{ij}, v_k \in\mathbb{Z}^{n+m}$ be the standard basis vectors. Define a $K_0$-grading of $\mathbb{K}[T_{ij}, S_k]$ by putting
$$
\deg T_{ij} = Q(v_{ij}) \in K_0, \qquad \deg S_k = Q(v_k) \in K_0.
$$
One can check that the sum $\mu:=l_{i1}Q(v_{i1})+\ldots+l_{i{n_i}}Q(v_{i{n_i}})$ is the same for all monomials $T_i^{l_i}$. Therefore, all polynomials $\mathfrak g_i$ are homogeneous with respect to our grading. Hence, the ideal $\mathfrak I$ is also homogeneous and the grading of the quotient algebra $R(A, P_0)$ is well defined. We will further consider a trinomial algebra to be $K_0$-graded.
\end{constr}

\begin{example}\label{ExampleVar}
     Let $n=4$, $r=3$, $m=0$. Set
     $$
        A = \begin{pmatrix}
            0 & -1 & 1\\
            1 & -1 & 0\\
        \end{pmatrix},\qquad
     P_0 = \begin{pmatrix}
         -2 & 2 & 1 & 0\\
         -2 & 0 & 0 & 4\\
     \end{pmatrix}.
     $$
     Then $R(A, P_0) = \mathbb K[T_{01}, T_{11}, T_{12}, T_{21}]/(T_{01}^2 + T_{11}^2T_{12} + T_{21}^4)$ is a trinomial algebra of type~2.    
     The matrix~$P_0$ has Smith normal form 
     $
     \begin{pmatrix}
        1 & 0 & 0 &0\\
        0 & 2 & 0 & 0\\
     \end{pmatrix}.
     $
     Therefore $K_0  = \mathbb Z^4 / \im P_0^T\simeq \mathbb Z^2\oplus\mathbb Z_2$. The grading can be given as
     $$
     \deg T_{01} = \begin{pmatrix}
         2\\ 0\\ [1]_2\\
     \end{pmatrix},\quad\deg T_{11} = \begin{pmatrix}
     0\\ 1\\ [0]_2\end{pmatrix},\quad\deg T_{12} = \begin{pmatrix}4\\ -2\\ [0]_2\end{pmatrix},\quad \deg T_{21} = \begin{pmatrix} 1\\ 0\\ [1]_2\end{pmatrix}.
     $$
     In this case, $$\mu = \deg(T_{01}^2 + T_{11}^2T_{21} + T_{21}^4) = \begin{pmatrix}
         4\\
         0\\
         [0]_2
     \end{pmatrix}.$$
\end{example}

\begin{remark}\label{degRemark}
    Denote $e_k = \deg S_k \in K_0$. The definition of $K_0$ implies that $\langle e_1,\ldots,e_m \rangle \hm\simeq \mathbb{Z}^m$ is a direct summand in $K_0$. Hence, for every $v \in K_0$ the coefficient of $e_k$ is well defined. We denote it by~$e^k(v)$. Note that $S_k$ is the only variable with $e^k(S_k)\neq 0$. Therefore, the grading $\deg$ is non-negative for $e_1,\ldots,e_m$, i.e., a homogeneous component $R(A, P_0)_v = \{f\in R(A,P_0) \,|\, \deg f = v\,\}$ has to be zero whenever $e^k(v)<0$ for some $k$. Furthermore, if $e^k(v)=d>0$, then $S_k^d$ divides any element of $R(A, P_0)_v$.
\end{remark}

\begin{remark}\label{Type1Remark}
    Let $R(A, P_0)$ be of type~1. Then
    $$
    K_0\simeq\bigoplus_{i=1}^r \mathbb Z^{n_i}/\langle(l_{i1},\ldots,l_{in_i})\rangle \oplus \mathbb Z^m.
    $$    
\end{remark}

\begin{lemma}\label{KerLem1}
    Let $R(A, P_0)$ be of type 1. Then every $K_0$-homogeneous element $h\in R(A, P_0)$ has the form
    $$
    h = F(T_1^{l_1})\Prod T_{ij}^{b_{ij}}\Prod S_k^{d_k} 
    $$
    with $b_{ij}, d_k\in\mathbb Z_{\ge 0}$ and $F$ being a polynomial in one variable.
\end{lemma}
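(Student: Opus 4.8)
The plan is to lift $h$ to the polynomial ring, describe which monomials can occur in a fixed $K_0$-degree using the block structure of $P_0$ special to type~1, and then collapse the resulting polynomial in the $r$ monomials $T_i^{l_i}$ to a polynomial in the single variable $T_1^{l_1}$ by means of the defining relations. Since the ideal $\mathfrak I$ is $K_0$-homogeneous, every homogeneous component of $R(A,P_0)$ is the image of the corresponding homogeneous component of $\mathbb K[T_{ij},S_k]$, so I may fix a homogeneous representative $\tilde h\in\mathbb K[T_{ij},S_k]$ of $h$ and work with monomials of the fixed degree $v=\deg h$.

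The combinatorial heart is the description of the monomials of degree $v$. Two monomials have equal $K_0$-degree precisely when their exponent vectors differ by an element of $\im(P_0^T)$, i.e. by an integer combination of the rows $l_1,\dots,l_r$ of $P_0$. For type~1 these rows are supported on disjoint blocks and vanish on the $S_k$-coordinates. Hence the $S_k$-exponents are forced and equal to $e^k(v)$, which is non-negative by Remark~\ref{degRemark} (so $R(A,P_0)_v=0$ unless all of them are), while in each block $i$ the exponent vector $(c_{i1},\dots,c_{in_i})$ is determined only modulo $\mathbb Z\cdot l_i$. Because every $l_{ij}\ge 1$, raising the multiple of $l_i$ raises all exponents at once, so each block admits a unique component-wise minimal non-negative representative $(b_{i1},\dots,b_{in_i})$, and every admissible exponent vector is $(b_{ij}+s_i l_{ij})_j$ for a unique $s_i\in\mathbb Z_{\ge 0}$. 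As these block conditions are independent (cf. the splitting in Remark~\ref{Type1Remark}), the common factor $\prod T_{ij}^{b_{ij}}\prod S_k^{d_k}$ divides every monomial of degree $v$, and dividing out leaves $\prod_{i=1}^r (T_i^{l_i})^{s_i}$.

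It then follows that $\tilde h=\bigl(\prod T_{ij}^{b_{ij}}\prod S_k^{d_k}\bigr)\cdot G(T_1^{l_1},\dots,T_r^{l_r})$ for some polynomial $G$ in the $r$ monomials $u_i:=T_i^{l_i}$. To finish I return to $R(A,P_0)$ and use the relations $\mathfrak g_i=0$, which telescope to $T_i^{l_i}=T_1^{l_1}+(a_1-a_i)$ for every $i$. Substituting these affine expressions turns $G$ into a one-variable polynomial $F(T_1^{l_1})$, giving the asserted form with $b_{ij},d_k\in\mathbb Z_{\ge 0}$.

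I expect the only delicate point to be the combinatorial step: verifying that the minimal block representatives $b_{ij}$ can be chosen non-negative and simultaneously for all monomials occurring in $\tilde h$, so that a single common factor is pulled out, and that the residual exponents $s_i$ of $u_i$ range over $\mathbb Z_{\ge 0}$. This rests essentially on the positivity $l_{ij}\ge 1$ and on the disjoint-block, $S$-free shape of $P_0$ in type~1; the concluding substitution via the relations is then routine.
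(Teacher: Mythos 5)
Your proof is correct and takes essentially the same route as the paper's: both describe the monomials of a fixed $K_0$-degree blockwise modulo $\mathbb Z\cdot l_i$ (your explicit computation with $\im(P_0^T)$ is exactly the content of Remark~\ref{Type1Remark}, which the paper cites), pull out the common factor $\Prod T_{ij}^{b_{ij}}\Prod S_k^{d_k}$, and then collapse the resulting polynomial in $T_1^{l_1},\dots,T_r^{l_r}$ to one variable via the telescoped relations $\mathfrak g_i=0$. Your normalization by the componentwise minimal non-negative representative is equivalent to the paper's requirement that $b_{ij}<l_{ij}$ for some $j$ in each block.
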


\begin{proof}
    One can write any monomial $f$ in variables $T_{ij}, S_k$ in the form
    $$
    f = \lambda\Prod_i (T_i^{l_i})^{q_i}\Prod\limits_{j} T_{ij}^{b_{ij}}\Prod\limits_{k} S_k^{d_k},
    $$
    where $\lambda\in\mathbb K, q_i, b_{ij}, d_k\in\mathbb Z_{\ge0}$, and for any $i$ there exists $j$ with $ b_{ij}<l_{ij}$. By Remark~\ref{Type1Remark}, such integers $b_{ij}, d_k$ correspond to precisely one element of $K_0$, and thus can be recovered from $\deg f$.

    Any $K_0$-homogeneous element $h\in R(A, P_0)$ is a sum of monomials with the same $b_{ij}, d_k$. Hence, 
    $$
    h = H(T_1^{l_1},\ldots, T_r^{l_r})\Prod\limits_{i,j} T_{ij}^{b_{ij}}\Prod\limits_{k} S_k^{d_k}
    $$
    for a polynomial $H$ in $r$ variables. According to equations $\mathfrak g_i=0$, monomials $T_i^{l_i}$ are actually polynomials in~$T_1^{l_1}$. As a consequence, $h$ has the required form.
\end{proof}

The following lemma is proved in \cite[Lemma 4]{GZ} in the case of a trinomial hypersurface of type~$2$ without variables $S_k$.

\begin{lemma}\label{KerLem}
    Let $R(A, P_0)$ be of type 2. Then every $K_0$-homogeneous element $h\in R(A, P_0)$ has a form
    $$
    h = F(T_1^{l_1}, T_2^{l_2})\Prod T_{ij}^{b_{ij}}\Prod S_k^{d_k} 
    $$
    with $d_k, b_{ij}\in\mathbb Z_{\ge 0}$ and $F$ being a homogeneous polynomial in two variables.
\end{lemma}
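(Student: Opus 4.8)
The plan is to imitate the proof of Lemma~\ref{KerLem1}, with two modifications forced by type~2: the direct-sum description of $K_0$ available in type~1 (Remark~\ref{Type1Remark}) is no longer at hand, and the common degree $\mu := \deg T_i^{l_i}$ need not be zero, so one must additionally keep track of the total power of the monomials $T_i^{l_i}$. First I would put an arbitrary monomial of $\mathbb{K}[T_{ij}, S_k]$ into the normalized form
$$
f = \lambda \prod_i (T_i^{l_i})^{q_i} \prod_{i,j} T_{ij}^{b_{ij}} \prod_k S_k^{d_k}, \qquad \lambda \in \mathbb{K},\ q_i, b_{ij}, d_k \in \mathbb{Z}_{\ge 0},
$$
by extracting from each block $i$ the maximal power $q_i = \min_j \lfloor c_{ij}/l_{ij}\rfloor$ of $T_i^{l_i}$, where $c_{ij}$ is the exponent of $T_{ij}$ in $f$; thus for every $i$ there is some $j$ with $b_{ij} < l_{ij}$.

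The key step is to show that $\deg f$ determines the residual data $b_{ij}$, $d_k$ and the total power $q := \sum_i q_i$ (though not the individual $q_i$). The exponents $d_k$ are read off directly as $e^k(\deg f)$ by Remark~\ref{degRemark}. For the $T$-part I would compare two normalized monomials $f, f'$ with $\deg f = \deg f'$: their $T$-exponent vectors then differ by an element of $\im P_0^T$, i.e. by an integer combination $\sum_{k=1}^r m_k \rho_k$ of the rows $\rho_k$ of $P_0$. Reading off the blocks, the $T_i$-exponents of $f$ and $f'$ differ by $m_i l_{ij}$ for $i \ge 1$ and by $-(\sum_k m_k) l_{0j}$ for $i = 0$; in either case they differ by an integer multiple of $l_{ij}$ within each block. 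A short computation with $\lfloor \cdot \rfloor$ then gives $b_{ij} = b'_{ij}$ for all $i, j$, and the contributions to $\sum_i q_i$ telescope to give $\sum_i q_i = \sum_i q_i'$, which is the claim.

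It follows that a $K_0$-homogeneous $h$ is a sum of normalized monomials all sharing the same $b_{ij}, d_k$ and the same total power $q$, so
$$
h = H(T_0^{l_0}, \ldots, T_r^{l_r}) \prod_{i,j} T_{ij}^{b_{ij}} \prod_k S_k^{d_k}
$$
with $H$ a homogeneous polynomial of degree $q$ in $r+1$ variables, homogeneity of $H$ being exactly the statement that the total power is fixed. Finally, expanding each relation $\mathfrak{g}_i = 0$ along its row of monomials exhibits it as a linear relation $\alpha_i T_i^{l_i} + \beta_i T_{i+1}^{l_{i+1}} + \gamma_i T_{i+2}^{l_{i+2}} = 0$ whose coefficients are the $2\times 2$ minors of $A$, nonzero because the columns of $A$ are pairwise linearly independent. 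An induction on $i$ then writes every $T_i^{l_i}$ as a $\mathbb{K}$-linear combination of $T_1^{l_1}$ and $T_2^{l_2}$; substituting these degree-one forms into the homogeneous $H$ yields a homogeneous polynomial $F(T_1^{l_1}, T_2^{l_2})$ of degree $q$, and $h$ acquires the asserted shape.

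I expect the main obstacle to be the key step above --- the passage from $\deg f$ to the residual exponents and the total power. In type~1 this is immediate from the block decomposition of Remark~\ref{Type1Remark}, but in type~2 the shared $-l_0$ columns couple the $T_0$ block to every other block, so the recovery must be carried out by the explicit modular comparison of exponent vectors rather than read off from a direct-sum splitting. Care is also needed to confirm that $H$ comes out genuinely homogeneous, since this is exactly what forces $F$ to be homogeneous and so distinguishes the type~2 conclusion from the type~1 one.
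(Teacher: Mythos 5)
Your proof is correct, but it takes a genuinely different route from the paper's. The paper settles the lemma in two lines: it invokes the proof of \cite[Proposition~3.5]{AHHL}, which already yields $h = F(T_1^{l_1}, T_2^{l_2})\,\tilde h$ with $F$ homogeneous and $\tilde h$ lying in a one-dimensional homogeneous component $R(A,P_0)_{\tilde\omega}$, and then only observes that one-dimensionality forces $\tilde h$ to be a monomial in the $T_{ij}$ and $S_k$. You instead rerun the type-1 argument of Lemma~\ref{KerLem1} from scratch, replacing the direct-sum description of Remark~\ref{Type1Remark} (unavailable in type~2) by an explicit comparison of exponent vectors modulo $\im P_0^T$: since the rows of $P_0$ are $(-l_0,0,\dots,0,l_k,0,\dots,0)$, two normalized monomials of equal $K_0$-degree have block exponents differing by $m_i l_{ij}$ with $\sum_{i=0}^r m_i = 0$ (setting $m_0 := -\sum_{k\ge 1} m_k$), whence $q_i' = q_i - m_i$, $b_{ij}' = b_{ij}$, and the total power $q = \sum_i q_i$ is preserved; this is exactly the computation needed, and it is what makes the homogeneity of $H$, hence of $F$, transparent --- the point your closing paragraph rightly identifies as the substantive difference from type~1. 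The trade-off: the paper's proof is shorter and delegates the structure theory to the literature, while yours is self-contained, purely lattice-arithmetic, and explains internally why $q$ (but not the individual $q_i$) is an invariant of $\deg h$. One presentational point: your step ``$h$ is a sum of normalized monomials sharing the same residual data'' implicitly chooses a $K_0$-homogeneous lift of $h$ to $\mathbb K[T_{ij},S_k]$, which is legitimate because the ideal $\mathfrak I$ is homogeneous; the paper's type-1 proof makes the same silent move, but a sentence acknowledging it would make your argument airtight.
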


\begin{proof}
    It follows from the proof of \cite[Proposition 3.5]{AHHL} that $h = F(T_1^{l_1}, T_2^{l_2})\Tilde{h}$ for a homogeneous polynomial $F$ and some $\Tilde{h}\in R(A, P_0)_{\Tilde\omega}$ with $\dim R(A, P_0)_{\Tilde{\omega}} = 1$. Let $\Tilde h = \sum\limits_{q=1}^m h_q$ where $h_q\in R(A, P_0)_{\Tilde\omega}$ are linearly independent monomials in $T_{ij}$ and~$S_k$. As $\dim R(A,P_0)_{\Tilde\omega}=1$, we see that~$\Tilde h$ has to be a monomial.
\end{proof}

The following lemma asserts that the group $K_0$ provides the finest grading of a trinomial algebra. Although it is proved in~\cite[Lemma 1]{Z} for a hypersurface of type 2, the proof in general case differs only in technical details.

\begin{lemma}\label{ZLemma}
Let $\deg$ be the $K_0$-grading on $R(A, P_0)$ defined in Construction~\ref{ConstrR}, and $\deggg$ an arbitrary grading on $R(A, P_0)$ by an abelian group $\widehat{K}$ such that all $T_{ij}$ and $S_k$ are homogeneous. Then $\deggg = \psi \circ \deg$ for some homomorphism ${\psi \colon K_0 \to \widehat{K}}$. 
In particular, any $K_0$-homogeneous locally nilpotent derivations is also $\widehat{K}$-homogeneous.
\end{lemma}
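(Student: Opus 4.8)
The plan is to produce $\psi$ through the universal property of the cokernel $K_0=\mathbb Z^{n+m}/\im(P_0^T)$. Write $\pi\colon\mathbb K[T_{ij},S_k]\to R(A,P_0)$ for the quotient map, let $v_{ij},v_k$ be the standard basis of $\mathbb Z^{n+m}$, and let $\deg_0$ be the standard $\mathbb Z^{n+m}$-grading, so that $\deg=Q\circ\deg_0$ where $Q\colon\mathbb Z^{n+m}\to K_0$ is the projection from Construction~\ref{ConstrR}. Since $\mathbb K[T_{ij},S_k]$ is free as a commutative algebra, the assignment $v_{ij}\mapsto\deggg(\pi(T_{ij}))$ and $v_k\mapsto\deggg(\pi(S_k))$ extends uniquely to a homomorphism $\phi\colon\mathbb Z^{n+m}\to\widehat K$; and for any monomial $f$ the image $\pi(f)$ is a product of $\deggg$-homogeneous generators, so $\deggg(\pi(f))=\phi(\deg_0(f))$.

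The central step is to show that $\phi$ annihilates $\im(P_0^T)$, i.e.\ that $\phi$ vanishes on every row of $P_0$. Here I would use that each $\mathfrak g_i$ maps to $0$ in the domain $R(A,P_0)$, so that the monomials $\pi(T_i^{l_i})$ are nonzero $\deggg$-homogeneous elements. In Type~1 the relation reads $\pi(T_i^{l_i})-\pi(T_{i+1}^{l_{i+1}})=a_{i+1}-a_i$, and since the right-hand side is a nonzero scalar (the $a_i$ are pairwise distinct), comparing $\widehat K$-homogeneous components forces $\deggg(\pi(T_i^{l_i}))=\deggg(\pi(T_{i+1}^{l_{i+1}}))=0$ for all $i$; as the $i$-th row of $P_0$ is $l_i$ placed in the $i$-th block, $\phi$ sends it to $\deggg(\pi(T_i^{l_i}))=0$. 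In Type~2 expanding the determinant writes $\mathfrak g_i$ as a combination of $\pi(T_i^{l_i}),\pi(T_{i+1}^{l_{i+1}}),\pi(T_{i+2}^{l_{i+2}})$ with coefficients the $2\times2$ minors of $A$, all nonzero by pairwise linear independence of the columns; three nonzero homogeneous elements can sum to zero only if their degrees all coincide, so $\deggg(\pi(T_i^{l_i}))$ is independent of $i$, and the $i$-th row $(-l_0\mid\cdots\mid l_i\mid\cdots)$ maps to $\deggg(\pi(T_i^{l_i}))-\deggg(\pi(T_0^{l_0}))=0$. In either case $\phi=\psi\circ Q$ for a unique $\psi\colon K_0\to\widehat K$.

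To finish, note that for a monomial $f$ we get $\deggg(\pi(f))=\phi(\deg_0(f))=\psi(Q(\deg_0(f)))=\psi(\deg(\pi(f)))$, so each $\deg$-homogeneous component $R(A,P_0)_w$, being spanned by images of monomials $f$ with $Q(\deg_0 f)=w$, lies inside the $\deggg$-component of degree $\psi(w)$; comparing with $R(A,P_0)=\bigoplus_{\hat w}R(A,P_0)^{\deggg}_{\hat w}$ yields $\deggg=\psi\circ\deg$. For the final assertion, if $\delta$ is $K_0$-homogeneous of degree $d$, then it carries $R(A,P_0)_w$ into $R(A,P_0)_{w+d}\subseteq R(A,P_0)^{\deggg}_{\psi(w)+\psi(d)}$; since every $\deggg$-homogeneous element is a sum of $\deg$-homogeneous ones and $\psi(w+d)=\psi(w)+\psi(d)$, the map $\delta$ raises $\deggg$ by the fixed element $\psi(d)$, hence is $\widehat K$-homogeneous.

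I expect the main obstacle to be the central step, and specifically the care needed to pass from \emph{``the relation vanishes in $R(A,P_0)$''} to the degree identities: one must invoke that $R(A,P_0)$ is a domain (so the monomials $T_i^{l_i}$ have nonzero image) together with the nonvanishing of the constant term in Type~1 and of the minors of $A$ in Type~2, in order to legitimately equate $\widehat K$-homogeneous components. The remaining passages are formal consequences of the universal property of $Q$ and of the directness of the gradings.
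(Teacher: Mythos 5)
Your proof is correct. Note that the paper does not actually prove Lemma~\ref{ZLemma}: it refers to \cite[Lemma 1]{Z}, where the statement is established for a trinomial hypersurface of type~2, and remarks that the general case ``differs only in technical details.'' Your argument --- extending the degree assignment to a homomorphism $\phi\colon\mathbb Z^{n+m}\to\widehat K$, showing that $\phi$ annihilates each row of $P_0$ by comparing $\widehat K$-homogeneous components in the relations $\mathfrak g_i=0$ (which uses exactly the ingredients you flag: $R(A,P_0)$ is a domain, $a_{i+1}-a_i\neq 0$ in type~1, and the $2\times 2$ minors of $A$ are nonzero in type~2), and then factoring through the cokernel $K_0$ --- is precisely the natural generalization of the cited proof, carried out uniformly for both types, so it supplies the details the paper leaves to the reader.
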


%\begin{proof}
    %Пусть, как и в конструкции 1, $v_{ij}, v_k \in \mathbb Z^{n+m}$ --- векторы стандартного базиса. Определим гомоморфизм 
    %$$
    %\varphi\colon
    %   \begin{cases}
            %\mathbb Z^{n+m} \to \widehat K,\\
            %v_{ij} \mapsto \deggg T_{ij} =: u_{ij},\\
    %        v_k \mapsto \deggg S_k =: u_k.
    %    \end{cases}
    %$$
    %Поскольку $g_i=0$ в $R(A, P_0)$ для каждого $i\in I$, однородный элемент $T_i^{l_i}$ есть линейная комбинация однородных элементов $1$ и $T_{i+1}^{l_{i+1}}$ (если $R(A, P_0)$ имеет тип $1$) или $T_{i+1}^{l_{i+1}}$ и $T_{i+2}^{l_{i+2}}$ (если $R(A, P_0)$ имеет тип $2$)  с ненулевыми коэффициентами. Следовательно, в $\widehat{K}$ выполнены равенства 
    %$$
    %\deggg T_i^{l_i} = \deggg T_k^{l_k} \qquad\text{для всех $i, k$},
    %$$
    %откуда
    %$$
    %\sum_{j=1}^{n_i}{l_{ij}u_{ij}} = \sum_{j=1}^{n_k}{l_{kj}u_{kj}} \qquad\text{для всех $i,k$}.
    %$$
    %Рассматривамые как уравнения относительно переменных $u_{ij}$ и $u_k$, эти равенства задают подгруппу $\im(P_0^T) \subseteq \mathbb Z^{n+m}$, а потому $\im(P_0^T)\subseteq \Ker \varphi$. Следовательно, гомоморфизм $\varphi\colon\mathbb Z^{n+m}\to\widehat K$ индуцирует искомый гомоморфизм $\psi\colon K_0 = \mathbb Z^{n+m}/\im(P_0^T) \to \widehat K$.

    %Вторая часть леммы очевидна: если локально нильпотентное дифференцирование $\delta$ является $K_0$-однородным, то $\deggg\delta = \psi(\deg\delta)$.
%\end{proof}

Let us recall some notions. Consider an abelian group $G$ and a $G$-graded $\mathbb K$-domain $R$. A non-zero non-unit homogeneous element $p\in R$ is said to be $G$-\textit{prime} if for any homogeneous elements $a, b\in R$ the condition $p\, | \, ab$ implies $p\, | \,a$ or $p\, | \,b$. A $G$-grading of a $\mathbb K$-domain $R$ is $G$-\textit{factorial} if every non-zero non-unit $G$-homogeneous element $a\in R$ is a product of $G$-primes. Evidently, such a decomposition is unique up to order and units.

In~\cite[Theorem 1.2]{HW}, the following properties of trinomial algebras are developed.

\begin{stat}\label{PropHausen}
    Consider a $K_0$-graded trinomial algebra $R(A, P_0)$.
    \begin{enumerate} 
    \item[\textrm{(i)}] $R(A, P_0)$ is a $\mathbb K$-domain.
    \item[(ii)] The $K_0$-grading of $R(A, P_0)$ is factorial and $R(A, P_0)$ has only constant invertible homogeneous elements.
    \item[(iii)] The variables $T_{ij}$ and $S_k$ are pairwise nonassociated $K_0$-prime generators for $R(A, P_0)$.
    \end{enumerate} 
\end{stat}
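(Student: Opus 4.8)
The plan is to treat the two types as uniformly as possible and to reduce, at the outset, to the case $m=0$. Since $P_0$ has zero columns in the $S$-positions, the group splits as $K_0\simeq K_0'\oplus\langle e_1,\ldots,e_m\rangle$ with $\langle e_1,\ldots,e_m\rangle\simeq\mathbb Z^m$ (Remark~\ref{degRemark}), and correspondingly $R(A,P_0)\simeq R'[S_1,\ldots,S_m]$, where $R'$ is the trinomial algebra in the variables $T_{ij}$ alone. All three assertions are stable under adjoining graded polynomial variables: a graded Gauss lemma propagates $K_0'$-factoriality to $K_0$-factoriality, the $S_k$ become new $K_0$-primes, and pre-existing $K_0$-primes stay prime. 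So it suffices to prove (i)--(iii) for $R'$.

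For (i) I would first exhibit $R'$ as a complete intersection. The $r-1$ relations $\mathfrak g_i$ form a \emph{chain}: in both types each $\mathfrak g_i$ is a genuine three-term (trinomial) relation in the monomials of three (type~2) or two (type~1) consecutive blocks, and each subsequent relation introduces exactly one fresh block of variables. Ordering the relations accordingly, one checks they form a regular sequence in $\mathbb K[T_{ij}]$, so $R'$ is a Cohen--Macaulay complete intersection of dimension $n-(r-1)$ and in particular satisfies Serre's condition $S_2$. To upgrade this to a domain I would build $R'$ as an iterated extension $B_0\subseteq B_1\subseteq\cdots$, where $B_0$ is the polynomial ring in the initial block(s) and each step adjoins the next block modulo one relation; concretely in type~1 the relation $\mathfrak g_i=0$ rewrites as $T_{i+1}^{l_{i+1}}=\varphi$ for an element $\varphi$ of the domain built so far, so the step is $B_{i-1}[\text{new block}]/(T_{i+1}^{l_{i+1}}-\varphi)$, and the analogous statement holds in type~2.

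The \textbf{main obstacle} is precisely the primality at each step, i.e.\ showing that each relation generates a prime ideal over the previously constructed domain. This is where the hypotheses on $A$ enter decisively: already for $r=2,\ l_1=l_2=(2)$ the relation $T_{11}^2-T_{21}^2-(a_2-a_1)$ is irreducible \emph{only because} $a_2\neq a_1$, the value $a_2-a_1=0$ causing it to split. Thus the pairwise distinctness (type~1), resp.\ pairwise linear independence of the columns $a_i$ (type~2), is exactly what guarantees that the constant/coefficients in each trinomial are nonzero and that the relation stays prime. I would isolate this as a lemma on primality of $y^{l}-\varphi$ (resp.\ the type~2 trinomial) over a graded UFD, the nondegeneracy of $A$ feeding in as the non-vanishing needed to exclude a factorization. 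An equivalent route is to verify Serre's $R_1$ by a Jacobian-rank computation for the $\mathfrak g_i$ on $X$, again reducing to the nondegeneracy of $A$, and to conclude normality (hence, with connectedness, the domain property) from $R_1+S_2$.

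Given (i), parts (ii) and (iii) follow from the grading. The absence of nonconstant invertible homogeneous elements comes from pointedness: the components $e^k$ of Remark~\ref{degRemark} (and their analogues on the $T$-blocks) are nonnegative on every nonzero homogeneous component, so an invertible homogeneous element and its inverse both have nonnegative degrees summing to $0$, forcing degree $0$ and hence a constant. For (iii) I would show each variable is $K_0$-prime by checking that $R'/(T_{ij})$ is again integral — setting $T_{ij}=0$ kills $T_i^{l_i}$ and splits the chain into smaller trinomial (or monomial) systems to which the analysis of (i) applies — and that distinct variables are non-associated, since the only homogeneous units are constants while distinct variables carry distinct degrees. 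Finally, for $K_0$-factoriality I would feed in the normal form from Lemmas~\ref{KerLem1} and~\ref{KerLem}: every homogeneous $h$ equals $F(T_1^{l_1})\Prod T_{ij}^{b_{ij}}$ (type~1), resp.\ $F(T_1^{l_1},T_2^{l_2})\Prod T_{ij}^{b_{ij}}$ (type~2), with $F$ a polynomial in one variable, resp.\ a homogeneous polynomial in two variables. Over $\mathbb K=\overline{\mathbb K}$ such an $F$ splits into linear factors, each of which — rewriting $T_1^{l_1}-\alpha$ as $T_i^{l_i}-\beta_i$ via the relations — is $K_0$-prime by the same nondegeneracy argument, while the $\Prod T_{ij}^{b_{ij}}$ part is a product of the prime variables. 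This exhibits every homogeneous element as a product of $K_0$-primes; uniqueness is automatic in the domain $R'$, yielding (ii).
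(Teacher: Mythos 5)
First, a point of reference: the paper does not prove Proposition~\ref{PropHausen} at all — it is quoted from \cite[Theorem 1.2]{HW}, and that external proof is the deep input on which the whole paper rests. So your attempt cannot be measured against an argument in the paper; it has to stand on its own, and as written it does not.

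The gap sits exactly at what you yourself call the ``main obstacle''. Your induction reduces (i) to the claim that, over the domain $B_{i-1}$ built so far, the element $T_{i+1}^{l_{i+1}}-\varphi$ (resp.\ the type-2 trinomial) generates a prime ideal; but your discussion establishes only that the nondegeneracy of $A$ is \emph{necessary} for this (the $a_2=a_1$ example), never that it is \emph{sufficient}. Sufficiency is the entire content of the theorem: already at the first step one must show that $T_1^{l_1}-T_2^{l_2}-c$ with $c\neq 0$ is prime, where both terms are arbitrary monomials in disjoint sets of variables; this requires a genuine argument (Newton-polytope indecomposability, a Capelli-type criterion applied blockwise, or normality plus connectedness), and at later steps it must be run over a ring $B_{i-1}$ that is no longer a polynomial ring, so the ``graded UFD'' hypothesis of your would-be lemma must itself be propagated along the induction. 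Your fallback via Serre's criterion has the same status: the Jacobian estimate is not carried out, and, more seriously, $R_1+S_2$ only yields that $R'$ is a finite product of normal domains, so connectedness of $\Spec R'$ is still needed. For type 2 it follows from a positive grading (cone structure), but for type 1 the equations carry nonzero constant terms, no positive grading makes them homogeneous, and connectedness is not addressed — a smooth affine scheme such as $V(x^2-1)$ can perfectly well be disconnected. The same unproved primality is then reused in your argument for (ii): that each factor $T_1^{l_1}-\alpha$, resp.\ $\beta_2T_1^{l_1}-\beta_1T_2^{l_2}$, is $K_0$-prime is justified ``by the same nondegeneracy argument'', i.e.\ by the lemma that was never proved. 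Finally, note a circularity risk: Lemma~\ref{KerLem}, which you invoke for type 2, is proved in the paper via \cite[Proposition 3.5]{AHHL}, a result developed in a framework that already presupposes the properties of Proposition~\ref{PropHausen} (taken there from \cite{HH}); only Lemma~\ref{KerLem1} (type 1) is self-contained and safe to use here.
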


\section{Elementary derivations}
Here we define some $K_0$-homogeneous locally nilpotent derivations on $R(A, P_0)$ that we call elementary. The following lemma provides some basic properties of locally nilpotent derivations. The proof can be found in \cite[Principles 1, 5, 7, Corollary 1.23]{Freu}.

\begin{lemma}\label{BasicLem}
    Let $R$ be a $\mathbb K$-domain and $\delta$ a locally nilpotent derivation on $R$. Suppose $f, g \in R.$
    \begin{enumerate} 
    \item[\textrm{(i)}] $f\delta$ is a derivation. It is locally nilpotent if and only if $f\in\Ker\delta$.
    \item[(ii)] $\Ker\delta$ is factorially closed, i.e. $f, g\in\Ker\delta$ whenever $fg \in \Ker\delta$.
    \item[(iii)] If $f\, | \, \delta(f)$, then $\delta(f)=0$.
    \item[(iv)] If $f\, | \, \delta(g)$ and $g\, | \, \delta(f)$, then $\delta(f)=0$ or $\delta(g)=0$.
    \end{enumerate} 
\end{lemma}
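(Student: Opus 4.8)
The four assertions are standard facts about locally nilpotent derivations; the plan is to reduce (ii)--(iv) and one half of (i) to the multiplicativity of the degree function attached to a locally nilpotent derivation, and to dispatch the remaining elementary points by direct computation.

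First I would clear away the purely formal parts of (i). That $f\delta$ satisfies the Leibniz rule is a one-line check: $(f\delta)(ab) = f(\delta(a)b + a\delta(b)) = (f\delta)(a)\,b + a\,(f\delta)(b)$. For the ``if'' direction, assuming $f\in\Ker\delta$, an induction on $n$ using $\delta(f^n)=nf^{n-1}\delta(f)=0$ yields $(f\delta)^n(a) = f^n\delta^n(a)$, so local nilpotency of $\delta$ forces local nilpotency of $f\delta$.

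The technical heart is the degree function. For a nonzero locally nilpotent derivation $D$ on the $\mathbb K$-domain $R$ and a nonzero $r$, set $\deg_D(r) = \max\{n\ge 0 : D^n(r)\neq 0\}$, which is finite by local nilpotency. The key claim is multiplicativity: $\deg_D(rs) = \deg_D(r)+\deg_D(s)$ for nonzero $r,s$. To prove it, put $m=\deg_D(r)$, $n=\deg_D(s)$ and expand by the generalized Leibniz rule $D^{m+n}(rs)=\sum_i \binom{m+n}{i}D^i(r)D^{m+n-i}(s)$; every summand with $i\neq m$ vanishes because either $i>m$ or $m+n-i>n$, leaving the single term $\binom{m+n}{m}D^m(r)D^n(s)$. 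This is precisely where the hypotheses are used: $R$ being a domain makes the product of the two nonzero factors nonzero, while characteristic zero makes the binomial coefficient invertible in $\mathbb K$, so the whole expression is nonzero; an identical count gives $D^k(rs)=0$ for $k>m+n$. I expect this multiplicativity, together with its reliance on both the domain and the characteristic-zero hypotheses, to be the main point of the argument.

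With multiplicativity established the rest is bookkeeping. For (ii), if $fg\in\Ker D$ with $fg\neq 0$ then $0=\deg_D(fg)=\deg_D(f)+\deg_D(g)$ with both terms $\ge 0$, forcing $f,g\in\Ker D$; the case $fg=0$ is immediate in a domain. For (iii), I would write $\delta(f)=hf$ and, assuming $\delta(f)\neq 0$ (so $h,f\neq 0$), compare $\deg_\delta(\delta(f))=\deg_\delta(f)-1$ with $\deg_\delta(hf)=\deg_\delta(h)+\deg_\delta(f)\ge\deg_\delta(f)$, a contradiction. Part (iv) is the same idea symmetrized: from $\delta(f)=bg$ and $\delta(g)=af$ with both nonzero one obtains $\deg_\delta(f)>\deg_\delta(g)$ and $\deg_\delta(g)>\deg_\delta(f)$ at once, which is absurd. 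Finally, the ``only if'' of (i) follows from (iii): since $(f\delta)(f)=f\cdot\delta(f)$ is divisible by $f$, applying (iii) to the locally nilpotent derivation $D=f\delta$ gives $f\delta(f)=0$, hence $\delta(f)=0$, i.e.\ $f\in\Ker\delta$.
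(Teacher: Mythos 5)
Your proof is correct. Note that the paper does not actually prove this lemma; it simply cites Freudenburg \cite{Freu} (Principles 1, 5, 7, Corollary 1.23), and your argument --- establishing multiplicativity of the degree function $\deg_D$ via the generalized Leibniz rule, using the domain hypothesis and characteristic zero exactly where you say, and then reducing (ii)--(iv) and the ``only if'' half of (i) to it --- is precisely the standard proof found in that reference, so you have in effect reconstructed the cited source rather than taken a different route. One small imprecision: your remark in (ii) that ``the case $fg=0$ is immediate in a domain'' does not hold as stated, since if $f=0$ then $fg=0\in\Ker\delta$ while $g$ may be arbitrary; factorial closedness is (as in Freudenburg) a condition on \emph{nonzero} products, the paper's wording of (ii) should be read that way, and your degree argument covers exactly that meaningful case.
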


\begin{constr}\label{ConstrDer}
    Let us define some natural types of locally nilpotent derivations on a trinomial algebra. Some of them were introduced in~\cite[Construction 4.2]{AHHL}.
    \begin{enumerate}
        \item[0.] Fix $p$ with $1\le p\le m$ and define as usual
        \begin{equation*}
            \begin{split}
                {\partial\over\partial S_p}(T_{ij}) &= 0 \quad \text{for all } (i, j);\\
                 {\partial\over\partial S_p}(S_k) &= \begin{cases}
                    1, &\text{$k=p$,}\\
                    0, &\text{$k \neq p$.}
                \end{cases}
            \end{split}
        \end{equation*}
    
        \item[1.] Consider a trinomial algebra of type 1. Suppose that there exists a tuple $C \hm= (c_1, \ldots, c_r)$ with $1\le c_i \le n_i$ and $l_{ic_i}=1$ for all $i$ except at most one. Then we define
         \begin{equation*}
             \begin{split}
                 \delta_C(T_{ij}) &= \begin{cases}
                 \Prod\limits_{k\neq i}\frac {\partial T_k^{l_k}}{\partial T_{kc_k}},&\text{$j=c_i$,}\\
                 0, &\text{$j \neq c_i$;}
                 \end{cases}\\
                 \delta_C(S_k) &= 0\quad \text{for all } k.
             \end{split}
         \end{equation*}

     \item[2.] Consider a trinomial algebra of type 2. Fix a vector $\beta \in \mathbb K^{r+1}$ lying  in the row space of the matrix $A \hm= (a_0,\ldots, a_r)$. Notice that if $\beta\neq 0$ then either $\beta_i \neq 0$ for all $i$ or $\beta_{i_0} = 0$ for exactly one index~$i_0$. Thus, we have two cases.
         \begin{enumerate}
            \item[2.1.] Suppose that $\beta_i \neq 0$ for all $i$ and there exists a tuple $C=(c_0,\ldots,c_r)$ with $1\le c_i\le n_i$ and $l_{ic_i}=1$ for all $i$ except at most one. Then define
                \begin{equation*}
                    \begin{split}
                        \delta_{C,\beta}(T_{ij}) &=
                            \begin{cases}
                            \beta_i\Prod\limits_{k\neq i}{\partial T_k^{l_k}\over\partial T_{kc_k}}, &\text{$j = c_i$,}\\
                            0, &\text{$j \neq c_i$;}
                            \end{cases}\\
                        \delta_{C,\beta}(S_k) &= 0 \quad\text{for all $k$.}
                    \end{split}
                \end{equation*}
            \item[2.2.] Suppose that $\beta_{i_0}=0$ for exactly one $i_0$ and there exists a tuple $C=(c_0,\ldots,c_r)$ with $1\le c_i\le n_i$ and $l_{ic_i}=1$ for all $i\neq i_0$ except at most one. Then define
                \begin{equation*}
                    \begin{split}
                        \delta_{C,\beta}(T_{ij}) &= 
                            \begin{cases}
                                %\beta_{i}\Prod\limits_{k\neq i_0}{\partial T_k^{l_k}\over\partial T_{kc_k}} &\text{при $j = c_{i},i = i_0$,}\\
                                \beta_i\Prod\limits_{k\neq i, i_0}{\partial T_k^{l_k}\over\partial T_{kc_k}}, &\text{$j = c_i$}\\
                                0, &\text{$j \neq c_i$;}
                            \end{cases}\\
                        \delta_{C,\beta}(S_k) &= 0 \quad\text{for all $k$.}
                    \end{split}
                \end{equation*}
         \end{enumerate}
    \end{enumerate}
     
    The assignments ${\partial\over\partial S_p}$, $\delta_C$ and $\delta_{C,\beta}$ are defined for the generators of the algebra $\mathbb K [T_{ij}, S_k]$ and can be extended to derivations by the Leibniz rule and linearity. One can check that ${\partial\over\partial S_p}(\mathfrak I) = \delta_C(\mathfrak I) = \delta_{C,\beta}(\mathfrak I) = 0$. Consequently, our derivations, if they exist, are well defined for the quotient algebra~$R(A, P_0)$. The existence of these derivations is equivalent to the condition $m>0$ for item~$0$ and some conditions on exponents $l_{ij}$ for items 1 and 2.
    %Отметим, что существование таких дифференцирований равносильно условию $m>0$ для ${\partial\over\partial S_p}$ и некоторым условиям на степени переменных в мономах $T_i^{l_i}$ для $\delta_C$ и $\delta_{C,\beta}$.
\end{constr}

\begin{example}
Apply Construction~\ref{ConstrDer} to $R(A, P_0) = \mathbb K[T_{01}, T_{11}, T_{12}, T_{21}]/(T_{01}^2 \hm+ T_{11}^2T_{12} + T_{21}^4)$ as in Example~\ref{ExampleVar}. Clearly, the vector $\beta\in\mathbb K^3$ lies in the row space of the matrix $A$ exactly when $\beta_0+\beta_1+\beta_2 = 0$. As $m=0$ and $T_{12}$ is the only variable with $l_{ij}=1$, only the case 2.2 with $C = (1, 2, 1)$ is possible. We have two possibilities for $i_0$:
     \begin{enumerate}
         \item [a)] $i_0 = 0.$ Then $\beta = (0, \beta_1, -\beta_1)$ and we have
         $$
         \delta_{C,\beta}(T_{01}) =  0,\quad \delta_{C,\beta}(T_{11}) = 0,\quad \delta_{C,\beta}(T_{12}) = 4\beta_1 T_{21}^3,\quad \delta_{C,\beta}(T_{21}) = -\beta_1 T_{11}^2.
         $$
         \item[b)] $i_0=2$. Then $\beta = (\beta_0, -\beta_0, 0)$ and we have
         $$
         \delta_{C,\beta}(T_{01}) = \beta_0 T_{11}^2,\quad \delta_{C,\beta}(T_{11}) = 0,\quad \delta_{C,\beta}(T_{12}) = -2\beta_0 T_{01},\quad \delta_{C,\beta}(T_{21}) = 0.
         $$
     \end{enumerate}
\end{example}

The below definition follows~\cite[Definition~1]{GZ}.

\begin{defn}
    We call a derivation $\widehat{\delta} \colon R(A, P_0) \to R(A, P_0)$ \textit{elementary} if either $\widehat{\delta}=0$ or $\widehat{\delta} = h\delta$, where $\delta$ is defined in Construction~\ref{ConstrDer}, and $h\in \Ker{\delta}$ is a homogeneous element.
\end{defn}

\begin{lemma}
    Every elementary derivation is locally nilpotent and $K_0$-homogeneous.
\end{lemma}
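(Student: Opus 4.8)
The plan is to reduce the statement to two facts about the three basic derivations $\partial/\partial S_p$, $\delta_C$ and $\delta_{C,\beta}$ of Construction~\ref{ConstrDer}: that each of them is locally nilpotent, and that each of them is $K_0$-homogeneous. Indeed, if $\widehat\delta=0$ the claim is trivial, so I may assume $\widehat\delta=h\delta$ with $\delta$ one of the basic derivations and $h\in\Ker\delta$ homogeneous. Once $\delta$ is known to be locally nilpotent, Lemma~\ref{BasicLem}(i) yields that $h\delta$ is locally nilpotent precisely because $h\in\Ker\delta$. And once $\delta$ is known to be $K_0$-homogeneous of some degree $d$, the product $h\delta$ sends a homogeneous $g$ to $h\,\delta(g)$, which is homogeneous of degree $\deg h+d+\deg g$; hence $h\delta$ is homogeneous of degree $\deg h+d$. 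So everything comes down to the two facts about the basic derivations.

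For $K_0$-homogeneity I would simply compute degrees on the generators, using that all monomials $T_k^{l_k}$ share one and the same degree $\mu$ (Construction~\ref{ConstrR}). For $\partial/\partial S_p$ one gets degree $-\deg S_p$. For $\delta_C$ (and likewise for $\delta_{C,\beta}$), since
$$
\deg\Bigl(\frac{\partial T_k^{l_k}}{\partial T_{kc_k}}\Bigr)=\mu-\deg T_{kc_k},
$$
the image $\delta_C(T_{ic_i})=\prod_{k\neq i}\frac{\partial T_k^{l_k}}{\partial T_{kc_k}}$ has degree $\sum_{k\neq i}(\mu-\deg T_{kc_k})$, so that
$$
\deg\delta_C(T_{ic_i})-\deg T_{ic_i}=\sum_{k}\bigl(\mu-\deg T_{kc_k}\bigr)-\mu,
$$
which is independent of $i$; the remaining generators are sent to $0$. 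Thus $\delta_C$ is $K_0$-homogeneous, and the same computation (with the product running over $k\neq i,i_0$ in case $2.2$) settles $\delta_{C,\beta}$.

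The substantive point is local nilpotency of the basic derivations. Here I would first recall that the set $\{a\in R(A,P_0):\delta^n(a)=0\text{ for some }n\}$ is a subalgebra, an immediate consequence of the Leibniz rule, so that it suffices to check that $\delta$ is nilpotent on each generator. The variables $S_k$, and the $T_{ij}$ with $j\neq c_i$, lie in $\Ker\delta$, so only the \emph{active} variables $T_{ic_i}$ remain. The key structural observation is that the hypothesis ``$l_{ic_i}=1$ for all $i$ except at most one index $k_0$'' forces $\frac{\partial T_k^{l_k}}{\partial T_{kc_k}}=\prod_{j\neq c_k}T_{kj}^{l_{kj}}$ to be free of the active variable $T_{kc_k}$ for every $k\neq k_0$. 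Consequently $\delta(T_{ic_i})=\prod_{k\neq i}\frac{\partial T_k^{l_k}}{\partial T_{kc_k}}$ is a monomial whose only possible active factor is $T_{k_0c_{k_0}}$, occurring to the power $l_{k_0c_{k_0}}-1$; moreover $\delta(T_{k_0c_{k_0}})=\prod_{k\neq k_0}\frac{\partial T_k^{l_k}}{\partial T_{kc_k}}$ is a product of kernel variables and hence lies in $\Ker\delta$. Writing $\delta(T_{ic_i})=M\,T_{k_0c_{k_0}}^{\,l_{k_0c_{k_0}}-1}$ with $M\in\Ker\delta$, repeated application only differentiates the power of $T_{k_0c_{k_0}}$, whence $\delta^{\,l_{k_0c_{k_0}}}(T_{ic_i})=0$. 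This proves that $\delta_C$ is locally nilpotent; the constants $\beta_i$, and in case $2.2$ the extra block $i_0$ (where $\beta_{i_0}=0$, so that $T_{i_0c_{i_0}}$ too is in the kernel and the products run over $k\neq i,i_0$), change nothing essential, so the same argument handles $\delta_{C,\beta}$, while $\partial/\partial S_p$ is trivially locally nilpotent.

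I expect the main obstacle to be the bookkeeping in the local nilpotency step: one must verify carefully that after reduction to the active variables, each image $\delta(T_{ic_i})$ contains at most the single active variable $T_{k_0c_{k_0}}$ and that this variable is sent into $\Ker\delta$. This is exactly where the hypotheses on the exponents $l_{ic_i}$, together with the vanishing pattern of $\beta$ in the type~2 cases, are used in full, and arranging the three cases $1$, $2.1$ and $2.2$ to fit one uniform argument is the part that requires the most care.
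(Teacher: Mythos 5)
Your proposal is correct and takes essentially the same route as the paper: reduce to the basic derivations of Construction~\ref{ConstrDer}, then verify local nilpotency generator by generator (the step the paper leaves as ``one can check'') and compute the uniform degree shift on generators for $K_0$-homogeneity, with $h\delta$ handled via Lemma~\ref{BasicLem}. The only slip is an off-by-one in the nilpotency index: writing $e=l_{k_0c_{k_0}}-1$, one has $\delta^{e+1}(T_{ic_i})=M\,e!\,\delta(T_{k_0c_{k_0}})^{e}\neq 0$ and only $\delta^{e+2}(T_{ic_i})=0$, i.e.\ $\delta^{l_{k_0c_{k_0}}+1}(T_{ic_i})=0$ rather than $\delta^{l_{k_0c_{k_0}}}(T_{ic_i})=0$, which does not affect the conclusion.
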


\begin{proof}
    Let $\widehat{\delta} = h\delta$ with $\delta$ defined in Construction~\ref{ConstrDer} and $h\in\Ker\delta$ be a homogeneous element. It suffices to check that $\delta$ is locally nilpotent and $K_0$-homogeneous. Then $\widehat{\delta}$ is locally nilpotent by Lemma~\ref{BasicLem}. Clearly, $\widehat{\delta}$ is also $K_0$-homogeneous (of degree $\deg\widehat{\delta} = \deg h + \deg\delta$).
    
    One can check that $\delta$ is nilpotent for generators $T_{ij}, S_k$ and thus locally nilpotent for $R(A, P_0)$. It also follows from considering the generators that $\delta$ is $K_0$-homogeneous of degree:
    \begin{align*}
            \deg{\partial\over\partial S_p} &=& - \deg S_p &&\text{for item 0;}\\
            \deg\delta_C &=& - \sum\limits_k{\deg T_{k{c_k}}} &&\text{for item 1;}\\
            \deg\delta_{C,\beta} &=& r\mu - \sum\limits_k\deg T_{k{c_k}} &&\text{for item 2.1;}\\
            \deg\delta_{C,\beta} &=& (r-1)\mu - \sum\limits_{k\neq i_0}\deg T_{k{c_k}} &&\text{for item 2.2.}
        \end{align*}
    Here $\mu = \deg T_0^{l_0} = \ldots = \deg T_r^{l_r}$ is the degree of polynomials $\mathfrak g_i$.
\end{proof}

The converse statement is the main result of the paper.

\begin{theorem}\label{MainTheorem}
    Any $K_0$-homogeneous locally nilpotent derivation of a trinomial algebra is elementary.
\end{theorem}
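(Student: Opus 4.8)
The plan is to classify an arbitrary $K_0$-homogeneous locally nilpotent derivation $\partial$ in two stages: first reduce to the irreducible case, then determine the action on the generators block by block, matching it against Construction~\ref{ConstrDer}.

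\emph{Step 1: reduction to irreducible derivations.} Given a nonzero $K_0$-homogeneous LND $\partial$, the images $\partial(T_{ij}),\partial(S_k)$ are homogeneous, so by $K_0$-factoriality (Proposition~\ref{PropHausen}(ii)) their greatest common divisor is a well-defined homogeneous element $h$. Write $\partial=h\partial_0$ with $\partial_0$ irreducible, i.e.\ with the gcd of its images a unit. By the standard reduction for locally nilpotent derivations, $\partial_0$ is again locally nilpotent, $h\in\ker\partial_0$, and $\partial_0$ is $K_0$-homogeneous. Since the derivations of Construction~\ref{ConstrDer} are themselves irreducible (for $\delta_C$ and $\delta_{C,\beta}$ the block-$k$ factor is absent from $\partial(T_{kc_k})$, so the gcd of the images is a constant), it suffices to show that every irreducible $\partial_0$ is a nonzero scalar multiple of one of them; then $\partial=h\partial_0$ is elementary. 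From now on write $\partial$ for $\partial_0$ and set $d=\deg\partial$.

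\emph{Step 2: the variables $S_k$.} Here I would use the projection $e^k$ of Remark~\ref{degRemark}. Since $\partial(S_k)$ has degree $d+e_k$ and $e^k(d+e_k)=e^k(d)+1$, and since $S_k^c$ divides every element of $e^k$-degree $c>0$, the inequality $e^k(d)+1>0$ would force $S_k\mid\partial(S_k)$, hence $\partial(S_k)=0$ by Lemma~\ref{BasicLem}(iii); non-negativity of the grading in the $e^k$-direction gives the opposite inequality. Therefore $\partial(S_k)\neq0$ forces $e^k(d)=-1$, which in turn makes $e^k(\deg\partial(x))<0$, i.e.\ $\partial(x)=0$, for every other generator $x$. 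By irreducibility $\partial(S_k)$ is then a unit, hence a nonzero constant, and $\partial$ is a scalar multiple of $\partial/\partial S_k$. So we may assume $\partial(S_k)=0$ for all $k$ and concentrate on the $T_{ij}$.

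\emph{Step 3: the common value $D$ and the block structure.} Applying $\partial$ to the relations $\mathfrak g_i=0$, in type~1 the constant terms die and we get $\partial(T_i^{l_i})=\partial(T_{i+1}^{l_{i+1}})$, so all $D_i:=\partial(T_i^{l_i})$ coincide with one element $D$; in type~2 the $\mathbb K$-linearity of $\mathfrak g_i$ in the monomials $T_i^{l_i}$ gives $\det\begin{pmatrix} D_i & D_{i+1} & D_{i+2}\\ a_i & a_{i+1} & a_{i+2}\end{pmatrix}=0$, so the tuple $(D_0,\dots,D_r)$ lies in the row space of $A$ over $R$. If all $D_i=0$, then each $T_i^{l_i}\in\ker\partial$, and since $\ker\partial$ is factorially closed (Lemma~\ref{BasicLem}(ii)) and $T_i^{l_i}=\prod_j T_{ij}^{l_{ij}}$, every $T_{ij}\in\ker\partial$; with $\partial(S_k)=0$ this yields $\partial=0$. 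Otherwise some $D_i\neq0$, and factorial closedness shows each block contains a moved variable. Using the identity $D_i=\sum_j l_{ij}(T_i^{l_i}/T_{ij})\partial(T_{ij})$ together with Lemma~\ref{BasicLem}(iii) (which forbids $T_{ij}\mid\partial(T_{ij})$) and the structural Lemmas~\ref{KerLem1}/\ref{KerLem}, I would extract a maximal monomial factor from $D_i$ and read off that in each block exactly one variable $T_{ic_i}$ is moved, with $\partial(T_{ic_i})$ proportional to $\prod_{k\neq i}\partial T_k^{l_k}/\partial T_{kc_k}$. Matching the proportionality constants across the blocks recovers the scalar vector $\beta$ (type~2), the requirement that $\beta$ lie in the row space of $A$, and, from the demand that $D_i$ be a genuine element of $R$ with non-negative $T_{ic_i}$-exponents, the condition $l_{ic_i}=1$ for all indices $i$ but at most one. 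This identifies $\partial$ with $\delta_C$ or $\delta_{C,\beta}$ up to a nonzero scalar.

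\emph{Main obstacle.} The delicate point is the final part of Step~3: upgrading ``each block has a moved variable'' to ``exactly one variable per block is moved, with the rigid product form.'' A priori $\partial(T_{ij})$ may itself involve variables of its own block, so the valuation bookkeeping in $D_i=\sum_j l_{ij}(T_i^{l_i}/T_{ij})\partial(T_{ij})$ is not immediate; one must combine Lemma~\ref{BasicLem}(iii)--(iv) with the prime factorization from Proposition~\ref{PropHausen} to exclude such cross-terms, and then handle separately the single block carrying the exceptional exponent $l_{ic_i}>1$. In type~2 there is the added task of showing that the row-space tuple $(D_i)$ is in fact a constant vector $\beta$ times a single element of $R$; this is precisely where $\beta$ and its two sub-cases (all $\beta_i\neq0$, or a unique vanishing $\beta_{i_0}$) originate, matching items~2.1 and~2.2 of Construction~\ref{ConstrDer}.
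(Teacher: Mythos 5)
Your Steps 1--2 are sound and match the paper's treatment of the $S_k$ (Lemmas~\ref{ST-AlterLem} and~\ref{S-lemma}), and your type~1 skeleton (common value $f$, divisibility by the partials, $K_0$-primality of the variables) is the paper's argument. But the two places you yourself flag as ``delicate'' are not details: they are the actual content of the proof, and your proposal contains no valid argument for either. First, the claim that each monomial $T_i^{l_i}$ has \emph{at most one} non-kernel variable, and that $l_{ic_i}>1$ can occur for at most one block, cannot be extracted by ``valuation bookkeeping'' from the identity $D_i=\sum_j l_{ij}(T_i^{l_i}/T_{ij})\partial(T_{ij})$ combined with Lemma~\ref{BasicLem} and factoriality: the summands can cancel, so no divisibility information about the individual $\partial(T_{ij})$ survives in $D_i$. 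The paper's proofs of these facts (Lemmas~\ref{nonKerLem} and~\ref{nonKerDegLem}) instead introduce auxiliary $\mathbb Z$- and $\mathbb Z_{l}$-gradings in which only the suspect variables have nonzero degree, and use Lemma~\ref{ZLemma} (the finest-grading property of $K_0$) to force $\partial$ to be homogeneous for those gradings as well; this is the key technique, and it is absent from your plan. (Once the product form is in hand, your $l_{ic_i}=1$ condition does follow from Lemma~\ref{BasicLem}(iv), but your stated reason --- ``non-negative $T_{ic_i}$-exponents of $D_i$'' --- is not an argument at all, since $D_i$ is automatically a polynomial.)

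Second, and more seriously, in type~2 the determinant relations only tell you that consecutive triples $(D_i,D_{i+1},D_{i+2})$ are linearly dependent over $\mathbb K$, i.e.\ that $\dim_{\mathbb K}\Lin\{D_0,\dots,D_r\}\le 2$. The theorem needs this dimension to equal $1$ (equivalently, $D_i=\beta_i f$ for scalars $\beta_i\in\mathbb K$ and a \emph{single} $f\in R$), and ``matching the proportionality constants across the blocks'' begs exactly this question: a priori the ratios $D_p/D_q$ live in $\Quot(R)$, not in $\mathbb K$. The paper devotes Proposition~\ref{forDimStat} to closing this gap, and the argument is genuinely nontrivial: one adjoins all kernel variables to form $\mathbb L=\overline{\mathbb K(T_{ij},S_k\,|\,\text{kernel})}$, observes that over $\mathbb L$ the variety becomes the hypersurface $\{x^{l_{01}}+y+z=0\}$, invokes the hypersurface result \cite[Lemma~9]{GZ} (Lemma~\ref{G-Z_Lem}) to get proportionality over $\mathbb L$, and then uses $\Quot(R(A,P_0))_0=\mathbb K\bigl(T_0^{l_0}/T_1^{l_1}\bigr)$ (Lemma~\ref{QuotLem}) together with a field-embedding argument to force the proportionality constant down into $\mathbb K$. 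Nothing in your proposal substitutes for this; without it, the type~2 case (and hence the theorem) is unproved.
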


This theorem is proved in Section~\ref{mainproof}; see Theorem~\ref{Type1Theorem} for trinomial algebras of type 1 and Theorem~\ref{Type2Theorem} for type 2.

\section{Some auxiliary results}

%In this section, some auxiliary lemmas for LNDs for trinomial algebras are proved.

By definition, any trinomial variety $X\subset \mathbb A^{n+m}$ admits a decomposition $X= Y\times \mathbb A^m$, where $Y\subset\mathbb A^n$ is the trinomial variety given by the same equations $\mathfrak g_i=0$ in the affine space with coordinates $T_{ij}$, and $\mathbb A^m$ is the affine space with coordinates $S_1,\dots,S_m$. Thus, $\mathbb K[X] = \mathbb K[Y] \otimes \mathbb K[\mathbb A^m]$. Evidently, both $\mathbb K[Y]$ and $\mathbb K[\mathbb A^m]$ are $K_0$-graded subalgebras of $\mathbb K[X]$. Moreover, weight monoids of two these subalgebras have zero intersection. The following lemma shows that any $K_0$-homogeneous LND for $\mathbb K[X]$ comes from either a homogeneous derivation of~$\mathbb K[Y]$ or a homogeneous derivation of~$\mathbb K[\mathbb A^m]$.

\begin{lemma}\label{ST-AlterLem}
    Let $\delta$ be $K_0$-homogeneous locally nilpotent derivation for a trinomial algebra $R(A, P_0)$. Then either $\delta(S_k)=0$ for all $k$ or $\delta(T_{ij})=0$ for all $i,j$.
\end{lemma}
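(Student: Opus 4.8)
The plan is to exploit the tensor decomposition $\mathbb{K}[X] = \mathbb{K}[Y] \otimes \mathbb{K}[\mathbb{A}^m]$ together with the special role of the variables $S_k$ recorded in Remark~\ref{degRemark}, namely that $e_k = \deg S_k$ generates a free direct summand $\mathbb{Z}^m \subseteq K_0$ and that $S_k$ is the \emph{only} generator whose degree has a nonzero $e_k$-coordinate. First I would set $d = \deg\delta \in K_0$ and examine the $m$ integers $e^k(d)$ for $k = 1,\dots,m$. Since $\delta$ is homogeneous of degree $d$, each value $\delta(T_{ij})$ lies in $R(A,P_0)_{\deg T_{ij} + d}$ and each $\delta(S_k)$ lies in $R(A,P_0)_{e_k + d}$. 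Because $\deg T_{ij}$ has all $e$-coordinates equal to zero while $e^k(e_k)=1$, the sign pattern of the $e^k(d)$ forces a dichotomy: if some $e^{k_0}(d) < 0$, then $\delta(S_{k_0})$ would have to live in a component with negative $e^{k_0}$-coordinate, which is zero by Remark~\ref{degRemark}, and similarly the $T_{ij}$ images are constrained.

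The key dichotomy I expect is governed by the \emph{sign} of the $e^k(d)$. Concretely, I would argue as follows. Suppose $\delta(T_{i_0 j_0}) \neq 0$ for some pair $(i_0,j_0)$. Then $\deg T_{i_0 j_0} + d$ is the degree of a nonzero element, so by Remark~\ref{degRemark} every coordinate $e^k(\deg T_{i_0 j_0} + d) = e^k(d)$ is $\ge 0$, forcing $e^k(d) \ge 0$ for all $k$. Now suppose in addition that $\delta(S_{k_1}) \neq 0$ for some $k_1$. Applying Lemma~\ref{BasicLem}(iii) to $S_{k_1}$: since $\delta$ is locally nilpotent, if $S_{k_1} \mid \delta(S_{k_1})$ we would get $\delta(S_{k_1})=0$, so $S_{k_1} \nmid \delta(S_{k_1})$. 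But $\delta(S_{k_1}) \in R(A,P_0)_{e_{k_1}+d}$ has $e^{k_1}$-coordinate equal to $1 + e^{k_1}(d) \ge 1$, and by the last sentence of Remark~\ref{degRemark} any element of a component with $e^{k_1}$-coordinate $\ge 1$ is divisible by $S_{k_1}$ — a contradiction. Hence once some $\delta(T_{ij}) \neq 0$, all $\delta(S_k) = 0$.

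It remains to treat the complementary case, where $\delta(T_{ij}) = 0$ for all $(i,j)$; then there is nothing to prove, as the second alternative of the conclusion holds outright. Thus the two alternatives are exhaustive: either every $\delta(T_{ij}) = 0$, or (when some $\delta(T_{ij}) \neq 0$) the argument above forces every $\delta(S_k) = 0$. I would phrase the final write-up as a clean case split on whether $\delta$ kills all the $T_{ij}$, invoking the divisibility consequence of Remark~\ref{degRemark} and Lemma~\ref{BasicLem}(iii) in the remaining case.

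The main obstacle I anticipate is the careful bookkeeping of the $e^k$-coordinates and making the divisibility step fully rigorous: one must be sure that ``$e^{k_1}$-coordinate $\ge 1$ implies divisibility by $S_{k_1}$'' applies to the single homogeneous element $\delta(S_{k_1})$ and that this genuinely contradicts local nilpotency via Lemma~\ref{BasicLem}(iii). A secondary subtlety is confirming that $\delta$ indeed preserves homogeneity coordinate-by-coordinate so that $\deg \delta(S_{k_1})$ really equals $e_{k_1} + d$ with the asserted $e^{k_1}$-value; this is immediate from the definition of a homogeneous derivation but should be stated explicitly to close the argument.
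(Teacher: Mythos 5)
Your proof is correct and follows essentially the same route as the paper: both rest on the sign analysis of the coordinates $e^k(\deg\delta)$ via Remark~\ref{degRemark} (negative coordinate forces the component to vanish, positive coordinate forces divisibility by $S_k$) combined with Lemma~\ref{BasicLem}(iii). The only difference is organizational — the paper splits directly on whether $e^k(\deg\delta)\ge 0$ for all $k$, whereas you split on whether some $\delta(T_{ij})\neq 0$ and recover that sign condition as a consequence, which is just the contrapositive arrangement of the same argument.
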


\begin{proof}
Let $e_k \hm= \deg S_k$ and $e^k(v)$ denotes the coefficient of $e_k$ for $v\in K_0$. According to Remark~\ref{degRemark}, for arbitrary homogeneous element $f$, the condition $e^k(\deg f) > 0$ implies $S_k \,|\,f$, and $e^k(\deg f)<0$ implies $f=0$. 
     %Предположим, $\delta(S_k)\neq 0$ для некоторого $k$ и $\delta(T_{ij})\neq 0$ для некоторый пары $(i,j)$.
     
     We have two possibilities:
     \begin{enumerate}
    \item[a)] $e^k(\deg\delta) \ge 0$ for all $k.$ Then $e^k(\deg\delta(S_k)) = e^k(\deg\delta + \deg S_k) = e^k(\deg\delta) + 1 > 0$. Consequently, $S_k\, | \, \delta(S_k)$ and Lemma~\ref{BasicLem} asserts $\delta(S_k)=0$.
    \item[b)] $e^k(\deg\delta) < 0$ for some $k$. In this case, we have $e^k(\deg\delta(T_{ij})) = e^k(\deg\delta) + e^k(\deg T_{ij}) \hm= e^k(\deg\delta) < 0$, hence, $\delta(T_{ij})=0.$
     \end{enumerate}

     The assertion follows.
\end{proof} 

\begin{defn}
    Let $\delta$ be a $K_0$-homogeneous locally nilpotent derivation of a trinomial algebra. Taking into account the above lemma, we call $\delta$ \textit{proper} if $\delta\neq 0$ but $\delta(S_k)=0$ for all $k$. In the other case, when $\delta(T_{ij})=0$ for all $i,j$, we call $\delta$ \textit{non-proper}.
\end{defn}
    In other words, proper derivations on~$\mathbb K[X]$ come from nonzero homogeneous derivations on $\mathbb K[Y]$, and non-proper ones come of homogeneous derivations on $\mathbb K[\mathbb A^m]$.
    
\begin{lemma}\label{S-lemma}
    A non-proper $K_0$-homogeneous derivation is elementary.
\end{lemma}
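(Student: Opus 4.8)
The plan is to exploit the non-proper hypothesis, namely $\delta(T_{ij})=0$ for all $i,j$, together with the non-negativity of the $K_0$-grading in the directions $e_k=\deg S_k$ recorded in Remark~\ref{degRemark}, in order to show that $\delta$ moves exactly one variable $S_p$. If $\delta=0$ it is elementary by definition, so assume $\delta\neq 0$ and write $d=\deg\delta$. Since $\delta$ is non-proper but nonzero, there is at least one index $p$ with $\delta(S_p)\neq 0$. I would first extract the degree constraints attached to such a $p$: because $\delta(S_p)$ is a nonzero homogeneous element of degree $d+e_p$, Remark~\ref{degRemark} forces $e^j(d+e_p)\ge 0$ for every $j$, and since $e^j(e_p)=\delta_{jp}$ this reads $e^p(d)\ge -1$ and, crucially, $e^j(d)\ge 0$ for every $j\neq p$.

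The heart of the argument is to promote this to uniqueness of $p$. The key observation is that, for any index $j$ with $e^j(d)\ge 0$, one has $\delta(S_j)=0$: indeed, a nonzero $\delta(S_j)$ would have degree $d+e_j$ with $e^j(d+e_j)=e^j(d)+1\ge 1$, so by Remark~\ref{degRemark} the variable $S_j$ would divide $\delta(S_j)$, and then Lemma~\ref{BasicLem}(iii) would force $\delta(S_j)=0$, a contradiction. Combining this with the previous paragraph, the mere existence of one $p$ with $\delta(S_p)\neq 0$ already guarantees $e^j(d)\ge 0$, hence $\delta(S_j)=0$, for every $j\neq p$. Thus $p$ is unique: we have $\delta(S_j)=0$ for all $j\neq p$ and $\delta(T_{ij})=0$ for all $i,j$, so $\delta$ is completely determined by the single value $\delta(S_p)$.

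Setting $h:=\delta(S_p)$, I would then verify that $\delta$ and $h\,\partial/\partial S_p$ agree on every generator $T_{ij}$ and $S_k$, whence they coincide as derivations, $\delta=h\,\partial/\partial S_p$. Here $\partial/\partial S_p$ is the item~$0$ derivation of Construction~\ref{ConstrDer}, which is available because $m\ge p\ge 1$, and $h$ is homogeneous since $\delta$ is $K_0$-homogeneous and $S_p$ is homogeneous. To conclude that $\delta$ is elementary it remains only to check $h\in\Ker(\partial/\partial S_p)$. This follows at once from Lemma~\ref{BasicLem}(i): the derivation $\partial/\partial S_p$ is locally nilpotent and $\delta=h\,\partial/\partial S_p$ is locally nilpotent by hypothesis, so $h\in\Ker(\partial/\partial S_p)$. (Alternatively one may note $e^p(\deg h)=e^p(d)+1=0$, so by the monomial description underlying Remark~\ref{degRemark} no monomial of $h$ involves $S_p$.) Either way $\delta=h\,\partial/\partial S_p$ is elementary.

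I expect the main obstacle to be the uniqueness step of the second paragraph: one must feed the valid-degree inequalities $e^j(d)\ge 0$ for $j\neq p$ into the divisibility mechanism of Remark~\ref{degRemark} and Lemma~\ref{BasicLem}(iii) to rule out $\delta$ moving two distinct variables $S_{p_1},S_{p_2}$ at once. Everything after that—identifying $\delta$ with $h\,\partial/\partial S_p$ on generators and placing $h$ in the kernel—is a routine verification.
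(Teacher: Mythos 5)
Your proof is correct and takes essentially the same route as the paper: a degree count in the $e_k$-coordinates (Remark~\ref{degRemark}) combined with Lemma~\ref{BasicLem} shows that only one variable $S_p$ can have nonzero image, after which $\delta = \delta(S_p)\,\partial/\partial S_p$ is elementary. The only cosmetic differences are the order of application (the paper first derives $e^{k_0}(\deg\delta)=-1$ from $S_{k_0}\nmid\delta(S_{k_0})$ and then kills the other $\delta(S_k)$ by negativity of degree, whereas you first get $e^j(\deg\delta)\ge 0$ for $j\neq p$ and then apply the divisibility argument to those indices) and your explicit check that $h\in\Ker(\partial/\partial S_p)$ via Lemma~\ref{BasicLem}(i), a point the paper leaves implicit.
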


\begin{proof}
Let $\delta$ be a non-proper derivation. The case $\delta=0$ is evident, so we can assume $\delta(S_{k_0})\neq 0$ for some $k_0$. As $\delta$ is homogeneous,
$\delta(S_{k_0})$ is a homogeneous element of $R(A, P_0)$. Decompose it into the sum of monomials. By Remark~\ref{degRemark}, the maximal power of $S_{k_0}$ that divides a monomial is the same for all monomials in $\delta(S_{k_0})$. Since $\delta(S_{k_0})\neq 0$, Lemma~\ref{BasicLem} implies $S_{k_0} \nmid\delta(S_{k_0})$, and therefore, $e^{k_0}(\deg\delta(S_{k_0}))=0$. Then $e^{k_0}(\deg\delta) = -1$, which forces $\delta(S_k)$ to be zero for $k\neq k_0$. As a result, $\delta = \delta(S_{k_0}){\partial\over\partial S_{k_0}}$ is an elementary derivation.
\end{proof}

\begin{defn}
Let $\delta$ be a locally nilpotent derivation for $R(A, P_0)$. We call the variable $T_{ij}$ \textit{non-kernel} (with respect to $\delta$) if $T_{ij}\notin \Ker{\delta}$. In this case, we also call $l_{ij}$ a \textit{non-kernel exponent}.
\end{defn}

\begin{lemma}\label{nonKerLem}
Let $\delta$ be $K_0$-homogeneous locally nilpotent derivation on $R(A, P_0)$. Then every monomial~$T^{l_i}_i$ has at most one non-kernel variable.
\end{lemma}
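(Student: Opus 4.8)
The plan is to show that if two distinct variables appearing in the monomial $T_i^{l_i}$ were both non-kernel, this would force a divisibility relation forbidden by Lemma~\ref{BasicLem}. Suppose for contradiction that $T_{ij_1}$ and $T_{ij_2}$ are both non-kernel, with $j_1\neq j_2$ and both indices among the variables $\{T_{i1},\dots,T_{in_i}\}$ occurring in the fixed monomial $T_i^{l_i}$. The key structural input is the finest grading: because $\delta$ is $K_0$-homogeneous, each $\delta(T_{ij})$ is a $K_0$-homogeneous element, so by Lemma~\ref{KerLem1} (type 1) or Lemma~\ref{KerLem} (type 2) it is a polynomial in the $T_k^{l_k}$ times a monomial in the remaining variables. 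I would exploit the fact that the exponent data of $\delta(T_{ij})$ is rigidly determined by its degree together with the divisibility constraints coming from Proposition~\ref{PropHausen}(iii), namely that the $T_{ij}$ are pairwise nonassociated $K_0$-primes.

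The heart of the argument is a mutual-divisibility estimate. First I would compare $\deg\delta(T_{ij_1})$ and $\deg\delta(T_{ij_2})$: since $\delta$ is homogeneous of a single degree $d$, we have $\deg\delta(T_{ij_1})=d+\deg T_{ij_1}$ and $\deg\delta(T_{ij_2})=d+\deg T_{ij_2}$. Because $T_{ij_1}$ and $T_{ij_2}$ both appear in the \emph{same} monomial $T_i^{l_i}$, their degrees differ by a controlled amount, and the finest-grading description (Remark~\ref{Type1Remark} in type 1, and the two-variable homogeneity in type 2) pins down the monomial parts of $\delta(T_{ij_1})$ and $\delta(T_{ij_2})$ up to the polynomial-in-$T^{l}$ factor. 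I expect to deduce that $T_{ij_2}\mid\delta(T_{ij_1})$ and $T_{ij_1}\mid\delta(T_{ij_2})$, or some closely related cross-divisibility. The mechanism is that the $K_0$-degree of $\delta(T_{ij_1})$ carries, in the same row block $i$, an excess in the $T_{ij_2}$-direction that can only be absorbed by an actual factor of $T_{ij_2}$ (using that $S_k^d$ or $T$-powers must literally divide homogeneous elements of the appropriate degree, as in Remark~\ref{degRemark} and Proposition~\ref{PropHausen}(iii)). Once both cross-divisibilities are in hand, Lemma~\ref{BasicLem}(iv) forces $\delta(T_{ij_1})=0$ or $\delta(T_{ij_2})=0$, contradicting the assumption that both variables are non-kernel.

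The main obstacle will be making the cross-divisibility claim precise and correct, because it depends delicately on the structure of $K_0$. I anticipate the cleanest route is to work one row-block $i$ at a time and use the direct-summand description of $K_0$: in type~1 the $i$-th block of $K_0$ is $\mathbb Z^{n_i}/\langle l_i\rangle$ by Remark~\ref{Type1Remark}, so within a fixed block the degree of a monomial determines its exponent vector modulo the single relation $l_i$. This near-uniqueness is exactly what converts a statement about degrees into a statement about literal divisibility. The subtlety is the phrase \emph{at most one} exceptional index in the definition of the elementary derivations and the clause ``for all $i$ except at most one'' in Construction~\ref{ConstrDer}: I must check that the argument does not secretly assume $l_{ij}=1$, and that the $T^{l}$-polynomial factor (which lies in the kernel by the structure of $\delta$) does not interfere with the divisibility bookkeeping. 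I would handle type~2 in parallel, replacing Lemma~\ref{KerLem1} by Lemma~\ref{KerLem} and using that the relevant factor is a homogeneous polynomial in the two monomials $T_1^{l_1},T_2^{l_2}$; the degree comparison is the same in spirit, and the conclusion again reduces to Lemma~\ref{BasicLem}(iv).
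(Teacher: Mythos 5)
The pivot of your plan --- deriving the two cross-divisibilities $T_{ij_2}\mid\delta(T_{ij_1})$ and $T_{ij_1}\mid\delta(T_{ij_2})$ and closing with Lemma~\ref{BasicLem}(iv) --- is precisely the step that does not go through, and the bookkeeping you propose in fact proves something else. Carry it out in type~1 for a block $i\neq 1$ (so the factor $F(T_1^{l_1})$ from Lemma~\ref{KerLem1} shares no variables with block $i$): every monomial of $\delta(T_{ij_1})$ has the same block-$i$ exponent vector $b\in\mathbb Z_{\ge0}^{n_i}$, likewise $c$ for $\delta(T_{ij_2})$, and homogeneity of $\delta$ together with Remark~\ref{Type1Remark} gives the congruence $c=b-e_{j_1}+e_{j_2}+q\,l_i$ for some $q\in\mathbb Z$. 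If both variables are non-kernel, then Lemma~\ref{BasicLem}(iii) forces $b_{j_1}=0$ and $c_{j_2}=0$; now $c_{j_1}=q\,l_{ij_1}-1\ge0$ gives $q\ge1$, hence $c_{j_2}=b_{j_2}+1+q\,l_{ij_2}\ge 2$, contradicting $c_{j_2}=0$. So the contradiction actually available is a \emph{self}-divisibility ($T_{ij_2}\mid\delta(T_{ij_2})$, killed by part~(iii)). The cross-divisibility $T_{ij_1}\mid\delta(T_{ij_2})$ is simply not a consequence of the degree data when $l_{ij_1}=1$ (then $c_{j_1}=q\,l_{ij_1}-1$ can vanish), and nothing in the degree comparison ever forces $b_{j_2}>0$, i.e.\ $T_{ij_2}\mid\delta(T_{ij_1})$. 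Since you explicitly defer ``making the cross-divisibility claim precise'' and that claim is false as stated, this is a genuine gap; the repair is to aim at Lemma~\ref{BasicLem}(iii) rather than (iv), as above.

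Even with that repair, your route still owes the cases you postpone: block $i=1$ in type~1, where non-divisibility of $\delta(T_{1j_1})$ by $T_{1j_1}$ requires, in addition to $b_{j_1}=0$, that $F$ have nonzero constant term; and all of type~2, where $\im (P_0^T)$ couples block $0$ to every other block and Lemma~\ref{KerLem} inserts a homogeneous factor in the block-1 and block-2 variables --- manageable, but not automatic. Compare with the paper's own proof, which avoids exponent bookkeeping entirely: it introduces an auxiliary $\mathbb Z$-grading $\degg$ with $\degg T_{ij_1}=l_{ij_2}$, $\degg T_{ij_2}=-l_{ij_1}$, and all other variables of degree $0$; every $\mathfrak g_p$ is then $\degg$-homogeneous of degree $0$, so Lemma~\ref{ZLemma} makes $\delta$ automatically $\degg$-homogeneous, and the sign dichotomy $\degg\delta\ge0$ or $\degg\delta\le0$ yields $T_{ij_1}\mid\delta(T_{ij_1})$ or $T_{ij_2}\mid\delta(T_{ij_2})$ (the relevant variable being the only one of positive, respectively negative, $\degg$-degree), finishing by Lemma~\ref{BasicLem}(iii). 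That argument is uniform in both types and uses neither Lemma~\ref{KerLem1} nor Lemma~\ref{KerLem}; it is the cleaner template to follow.
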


\begin{proof}

The proof is by reductio ad absurdum. Suppose that there exists a monomial with two non-kernel variables. We can assume that these variables are $T_{11}$ and $T_{12}$ in $T^{l_1}_1$. Define $\mathbb{Z}$-grading $\degg$ for $\mathbb{K}[T_{ij}, S_k]$ as follows:
$$\degg{T_{11}}=l_{12},\qquad \degg{T_{12}}=-l_{11},$$ $$\degg{T_{ij}}=\degg{S_k}=0 \quad \text{ for any other } (i, j) \mbox{ and any } k.$$
Evidently, all polynomials $\mathfrak g_i$ are homogeneous (of degree $0$) with respect to $\degg$. Thus, $R(A, P_0)$ is $\degg$-graded, and Lemma~\ref{ZLemma} forces $\delta$ to be homogeneous with respect to $\degg$. Only the following two cases are possible:

\begin{enumerate}
    \item[a)] If $\degg{\delta} \ge 0$, then $\degg{\delta(T_{11})}=\degg{\delta} + \degg{T_{11}} > 0$. Hence, $T_{11}\,|\,\delta(T_{11})$ because $T_{11}$ is the only variable with positive degree with respect to $\degg$. Lemma~\ref{BasicLem} implies that $\delta(T_{11})=0$.
    \item[b)] If $\degg{\delta} \le 0$, then $\degg{\delta(T_{12})}=\degg{\delta} + \degg{T_{12}} < 0$, and similarly we have $T_{12}\,|\,\delta(T_{12})$. It follows that $\delta(T_{12})=0$. 
\end{enumerate}
A contradiction in both cases.
\end{proof}

\begin{lemma}\label{nonKerDegLem}
    Assume that two different variables $T_{pc_p}$ and $T_{qc_q}$ appear in $T_p^{l_p}$ and $T_q^{l_q}$ with exponents $l_{pc_p}>1$ and $l_{qc_q}>1$. Then for every $K_0$-homogeneous LND~$\delta$ one has $\delta(T_{pc_p})=0$ or $\delta(T_{qc_q})=0$.
\end{lemma}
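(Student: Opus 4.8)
The plan is to reduce the whole statement to Lemma~\ref{BasicLem}(iv) applied to the pair $f=T_{pc_p}$, $g=T_{qc_q}$: it then suffices to establish the two divisibilities $T_{qc_q}\mid\delta(T_{pc_p})$ and $T_{pc_p}\mid\delta(T_{qc_q})$. First I would dispose of the trivial case: if $\delta(T_{pc_p})=0$ or $\delta(T_{qc_q})=0$ there is nothing to prove, so assume both variables are non-kernel. By Lemma~\ref{nonKerLem} each monomial carries at most one non-kernel variable, so $T_{pc_p}$ is the unique non-kernel variable of $T_p^{l_p}$ and $T_{qc_q}$ the unique one of $T_q^{l_q}$, while all remaining factors of these two monomials lie in $\Ker\delta$. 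Consequently
\begin{equation*}
\delta(T_p^{l_p}) = l_{pc_p}\,T_{pc_p}^{\,l_{pc_p}-1}\Big(\Prod_{j\neq c_p}T_{pj}^{l_{pj}}\Big)\,\delta(T_{pc_p}),
\end{equation*}
and the analogue holds for $q$. This is where the hypothesis $l_{pc_p}>1$ is decisive: since $l_{pc_p}-1\ge 1$, the variable $T_{pc_p}$ divides $\delta(T_p^{l_p})$, and likewise $T_{qc_q}\mid\delta(T_q^{l_q})$. Moreover the cofactors above involve only variables of block $p$ (resp.\ block $q$), which are pairwise nonassociated to the $K_0$-prime $T_{qc_q}$ (resp.\ $T_{pc_p}$) by Proposition~\ref{PropHausen}(iii).

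For a trinomial algebra of type~1 this already finishes the argument. The defining relations give $T_i^{l_i}-T_{i+1}^{l_{i+1}}=a_{i+1}-a_i\in\mathbb K$, so telescoping yields $T_p^{l_p}-T_q^{l_q}\in\mathbb K$ and hence $\delta(T_p^{l_p})=\delta(T_q^{l_q})=:D$. Now $T_{qc_q}\mid\delta(T_q^{l_q})=D$, whereas in the expression $D=\delta(T_p^{l_p})$ the factor $l_{pc_p}T_{pc_p}^{l_{pc_p}-1}\Prod_{j\neq c_p}T_{pj}^{l_{pj}}$ is a product of block-$p$ primes not associated to $T_{qc_q}$; since $T_{qc_q}$ is $K_0$-prime it must divide the remaining factor $\delta(T_{pc_p})$. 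Symmetrically $T_{pc_p}\mid\delta(T_{qc_q})$, and Lemma~\ref{BasicLem}(iv) gives $\delta(T_{pc_p})=0$ or $\delta(T_{qc_q})=0$, contradicting the assumption.

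For type~2 the same target is pursued, but the clean telescoping identity is unavailable: expanding $\mathfrak g_i$ along its first row yields only a linear relation $\alpha_i T_i^{l_i}+\beta_i T_{i+1}^{l_{i+1}}+\gamma_i T_{i+2}^{l_{i+2}}=0$ whose coefficients are the $2\times 2$ minors of $A$, all nonzero by pairwise independence of the columns. Applying $\delta$ produces a linear relation among the $\delta(T_i^{l_i})$, and the whole family spans a rank-two module, so that $\delta(T_i^{l_i})=a_{i1}u_1+a_{i2}u_2$ for two fixed homogeneous elements $u_1,u_2$. If some block $s\neq p,q$ has no non-kernel variable, then $a_{s1}u_1+a_{s2}u_2=\delta(T_s^{l_s})=0$ forces $u_1,u_2$ to be proportional, whence $\delta(T_p^{l_p})$ and $\delta(T_q^{l_q})$ are proportional and the type-1 reasoning applies verbatim. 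The main obstacle I anticipate is the opposite configuration, in which every intermediate block carries its own non-kernel variable, so divisibility by $T_{pc_p}$ or $T_{qc_q}$ need not propagate through the relation. I plan to resolve it by using that $\delta(T_p^{l_p})$ and $\delta(T_q^{l_q})$ share the $K_0$-degree $\mu+\deg\delta$ and, via the normal form of Lemma~\ref{KerLem}, differ only in the two-variable factor $F(T_1^{l_1},T_2^{l_2})$; comparing the $T_{pc_p}$- and $T_{qc_q}$-adic valuations in the $K_0$-factorial algebra (Proposition~\ref{PropHausen}(ii)) should then force both cross-divisibilities, again closing the argument through Lemma~\ref{BasicLem}(iv).
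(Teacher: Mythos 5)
Your overall target---the two cross-divisibilities $T_{qc_q}\mid\delta(T_{pc_p})$ and $T_{pc_p}\mid\delta(T_{qc_q})$ followed by Lemma~\ref{BasicLem}(iv)---is exactly the right one, and your argument is complete for type~1 (telescoping plus $K_0$-primality) and for type~2 when some monomial $T_s^{l_s}$, $s\neq p,q$, has no non-kernel variable. The genuine gap is the remaining type-2 configuration, in which every monomial carries a non-kernel variable: this is the crux of the lemma, and your plan for it fails as stated, for two reasons. First, the decomposition in Lemma~\ref{KerLem} is not unique---$(T_1^{l_1})^2\cdot M$ and $T_1^{l_1}\cdot\bigl(T_1^{l_1}M\bigr)$ both have the stated shape---so ``elements of equal degree differ only in the two-variable factor'' does not follow from that lemma; one needs the normalized decomposition of \cite[Proposition~3.5]{AHHL}, whose monomial part spans a one-dimensional component, together with Lemma~\ref{QuotLem} (transcendence of $T_1^{l_1}/T_2^{l_2}$) to conclude that the binary forms have equal degree and the monomial parts agree up to scalar. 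Second, valuation comparison alone cannot force the cross-divisibilities: writing $\delta(T_p^{l_p})=F_p\tilde h$ and $\delta(T_q^{l_q})=F_q\tilde h$, the configuration $F_p=(T_p^{l_p})^{e}H_p$, $F_q=(T_q^{l_q})^{e'}H_q$ with $e,e'\ge 1$ and $T_{pc_p},T_{qc_q}\nmid\tilde h$ satisfies every divisibility constraint these two elements impose while both cross-divisibilities fail. The missing idea is Lemma~\ref{BasicLem}(iii): since $\delta(T_p^{l_p})\neq0$, one has $T_p^{l_p}\nmid\delta(T_p^{l_p})$, which (after checking that a form $\alpha T_p^{l_p}+\beta T_q^{l_q}$ is divisible by $T_{pj}$ only when $\beta=0$) rules out factors of $F_p$ proportional to $T_p^{l_p}$; only then does $T_{pc_p}^{\,l_{pc_p}-1}\mid F_p\tilde h$ force $T_{pc_p}\mid\tilde h$, whence $T_{pc_p}$ divides $\delta(T_q^{l_q})$ and hence $\delta(T_{qc_q})$, and symmetrically. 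So your sketch is salvageable, but precisely at its hardest point the needed argument is absent.

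For comparison, the paper's proof sidesteps all of this and is uniform in the type. For the variable $T_{pc_p}$ it introduces the grading by $\mathbb Z_{l_{pc_p}}$ with $\degg T_{pc_p}=[1]_{l_{pc_p}}$ and all other variables of degree zero; every $\mathfrak g_i$ is homogeneous of degree zero because $T_{pc_p}$ enters it only through $T_{pc_p}^{l_{pc_p}}$, so the grading descends to $R(A,P_0)$, and Lemma~\ref{ZLemma} makes $\delta$ homogeneous for it. Then $\degg\delta(T_{pc_p})=[0]_{l_{pc_p}}$ (otherwise $T_{pc_p}\mid\delta(T_{pc_p})$ and Lemma~\ref{BasicLem}(iii) gives $\delta(T_{pc_p})=0$), hence $\degg\delta=[-1]_{l_{pc_p}}\neq[0]_{l_{pc_p}}$---this is exactly where $l_{pc_p}>1$ enters---so $\degg\delta(T_{qc_q})\neq[0]_{l_{pc_p}}$ and therefore $T_{pc_p}\mid\delta(T_{qc_q})$; the symmetric grading by $\mathbb Z_{l_{qc_q}}$ gives the other divisibility, and Lemma~\ref{BasicLem}(iv) finishes. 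No defining relations, no normal forms, and no case analysis are needed; the cyclic-grading trick is the efficient route here.
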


\begin{proof}
    Suppose that $\delta(T_{ic_i})\neq0$ and $\delta(T_{jc_j})\neq0$ for some $\delta$. Define the grading of $\mathbb K[\,T_{ij}, S_k\,]$ by the group $\mathbb Z_{l_{pc_p}}=\{[0]_{l_{pc_p}},\ldots,[l_{pc_p}-1]_{l_{pc_p}}\}$ by setting:
    $$
           \degg(T_{pc_p}) = [1]_{l_{pc_p}}, \qquad \degg(T_{ij}) = [0]_{l_{pc_p}} \quad\mbox{for all other $(i,j)$},
    $$
    $$
    \degg(S_k) = [0]_{l_{pc_p}}\quad\mbox{for all $k$}.
    $$
    Clearly, all $\mathfrak g_i$ are $\degg$-homogeneous polynomials (of degree 0). Hence, $R(A, P_0)$ is $\degg$-graded. It follows from Lemma~\ref{ZLemma} that derivation $\delta$ is $\degg$-homogeneous.

    Assume that $\degg\delta(T_{pc_p})\neq[0]_{l_{pc_p}}$. Then $T_{pc_p}\,|\,\delta(T_{pc_p})$ since $T_{pc_p}$ is the only variable with non-zero degree with respect to $\degg$. But this implies $\delta(T_{pc_p})=0$ by Lemma~\ref{BasicLem}. This contradiction proves that
    $$
    \degg\delta(T_{pc_p})=[0]_{l_{pc_p}}.
    $$
    Therefore, 
    $$
    \degg\delta = \degg\delta(T_{pc_p}) - \degg T_{pc_p} = [-1]_{l_{pc_p}}\neq [0]_{l_{pc_p}}.
    $$
    In consequence,
    $$
    \degg\delta(T_{qc_q}) = \degg\delta + \degg T_{qc_q}= \degg\delta \neq [0]_{l_{pc_p}}.
    $$
    Since $T_{pc_p}$ is the only variable with nonzero degree, we obtain
    $$
    T_{pc_p}\,|\,\delta(T_{qc_q}).
    $$
    By considering the similar graging by the group $\mathbb Z_{l_{qc_q}}$, we have:
    $$
    T_{qc_q}\,|\,\delta(T_{pc_p}).
    $$
    By Lemma~\ref{BasicLem}, either $\delta(T_{pc_p})=0$ or $\delta(T_{qc_q})=0$.
\end{proof}

\section{The proof of main result}\label{mainproof}
In this section, we prove Theorem~\ref{MainTheorem} separately for trinomial algebras of type 1 (see Theorem~\ref{Type1Theorem}) and for type 2 (see Theorem~\ref{Type2Theorem}).

\begin{theorem}\label{Type1Theorem}
Let $R(A, P_0)$ be of type 1 and $\delta$ a $K_0$-homogeneous locally nilpotent derivation of $R(A, P_0)$. Then $\delta$ is elementary.
\end{theorem}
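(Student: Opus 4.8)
The plan is to reduce to the essential case and then exhibit $\delta$ as a concrete homogeneous multiple of a Construction~\ref{ConstrDer} derivation $\delta_C$. By Lemma~\ref{ST-AlterLem}, $\delta$ is either non-proper or proper. If $\delta$ is non-proper then Lemma~\ref{S-lemma} already shows it is elementary, and $\delta=0$ is elementary by definition. So I would assume throughout that $\delta$ is proper and nonzero; thus $\delta(S_k)=0$ for all $k$ while $\delta(T_{ij})\neq 0$ for at least one variable.

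Next I would pin down the non-kernel variables. By Lemma~\ref{nonKerLem} each monomial $T_i^{l_i}$ has at most one non-kernel variable, and I claim it has exactly one. Indeed, if some $T_{i_0}^{l_{i_0}}$ had all its factors in $\Ker\delta$, then $T_{i_0}^{l_{i_0}}\in\Ker\delta$; since the relations $\mathfrak g_i=0$ force $T_i^{l_i}=T_{i_0}^{l_{i_0}}+\mathrm{const}$ for every $i$, all $T_i^{l_i}$ would lie in $\Ker\delta$, and factorial closedness (Lemma~\ref{BasicLem}(ii)) together with $\delta(S_k)=0$ would give $\delta=0$, a contradiction. Hence for each $i$ there is a unique index $c_i$ with $T_{ic_i}\notin\Ker\delta$, defining a tuple $C=(c_1,\dots,c_r)$. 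Applying Lemma~\ref{nonKerDegLem} to pairs of non-kernel variables shows that at most one exponent $l_{ic_i}$ can exceed $1$, so $C$ satisfies the hypotheses of Construction~\ref{ConstrDer}(1) and $\delta_C$ exists.

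It remains to produce a homogeneous $h\in\Ker\delta_C$ with $\delta=h\delta_C$. Differentiating the relations $\mathfrak g_i=0$ gives $\delta(T_1^{l_1})=\cdots=\delta(T_r^{l_r})=:\Phi$, a nonzero homogeneous element. Write $D_i:=\partial T_i^{l_i}/\partial T_{ic_i}$; since $T_{ic_i}$ is the only non-kernel variable in the $i$-th block, $\Phi=D_i\,\delta(T_{ic_i})$ for every $i$. Setting $\Psi:=\prod_k D_k$, we likewise have $\delta_C(T_{ic_i})=\prod_{k\neq i}D_k=\Psi/D_i$, so the ratio $\delta(T_{ic_i})/\delta_C(T_{ic_i})=\Phi/\Psi$ is independent of $i$. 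Thus it suffices to show $h:=\Phi/\Psi$ is a genuine homogeneous element of $R(A,P_0)$: then $\delta$ and $h\delta_C$ agree on every generator (on $T_{ic_i}$ by construction, and both vanish on the remaining $T_{ij}$ and on the $S_k$), hence $\delta=h\delta_C$; and $h\in\Ker\delta_C$ follows from Lemma~\ref{BasicLem}(i) because $\delta=h\delta_C$ is locally nilpotent.

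The heart of the argument, and the step I expect to be the main obstacle, is the divisibility $\Psi\mid\Phi$. Here I would use that the $K_0$-grading is factorial and that the variables $T_{ij}$ are pairwise nonassociated $K_0$-primes (Proposition~\ref{PropHausen}), so that for the homogeneous element $\Phi$ the relation $\Psi\mid\Phi$ may be checked one prime at a time. Up to a nonzero constant, $\Psi=\prod_{k}\bigl(T_{kc_k}^{\,l_{kc_k}-1}\prod_{j\neq c_k}T_{kj}^{\,l_{kj}}\bigr)$, and the variables occurring here split into the disjoint blocks indexed by $k$. For a variable in the $k$-th block I read off its multiplicity in $\Phi$ from the representation $\Phi=D_k\,\delta(T_{kc_k})$: since $\delta(T_{kc_k})\in R(A,P_0)$, the monomial $D_k$ divides $\Phi$, so the power of each block-$k$ variable in $\Phi$ is at least its power in $\Psi$. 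Running over all $k$ accounts for every variable, whence $\Psi\mid\Phi$ by factoriality. This yields $h\in R(A,P_0)$, homogeneous of degree $\deg\Phi-\deg\Psi$, and completes the proof.
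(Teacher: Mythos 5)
Your proof is correct and follows essentially the same route as the paper's: reduce to a proper derivation via Lemmas~\ref{ST-AlterLem} and~\ref{S-lemma}, use Lemmas~\ref{nonKerLem} and~\ref{nonKerDegLem} to identify the tuple $C$, equate the $\delta(T_i^{l_i})$ via the relations $\mathfrak g_i=0$, and invoke $K_0$-factoriality and $K_0$-primality of the variables (Proposition~\ref{PropHausen}) to write $\delta=h\delta_C$ with $h\in\Ker\delta_C$ by Lemma~\ref{BasicLem}. The only differences are cosmetic: you justify ``exactly one non-kernel variable per monomial'' through factorial closedness of the kernel rather than directly from the displayed formulae, and you spell out the block-by-block multiplicity count behind the divisibility $\Psi\mid\Phi$, which the paper leaves implicit.
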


\begin{proof} 
By Lemma~\ref{S-lemma}, we can assume that $\delta$ is proper (in particular, $\delta\neq0$). As
$$
0=\delta(\mathfrak g_i) = \delta(T_i^{l_i})-\delta(T_{i+1}^{l_{i+1}})
$$
for all $i\in I$, one has
\begin{equation}\label{fdef}
\delta(T_1^{l_1}) = \ldots = \delta(T_r^{l_r}) =: f\in R(A, P_0). 
\end{equation}
According to Lemma~\ref{nonKerLem}, every monomial $T_i^{l_i}$ has at most one non-kernel variable. Then, by the Leibniz rule,
\begin{equation}\label{delta}
\delta(T_i^{l_i})={\partial T_i^{l_i}\over \partial T_{ic_i}}\delta(T_{ic_i})\qquad\mbox{for some $c_i$}.
\end{equation}
As $\delta\neq 0$, formulae~(\ref{fdef}) and (\ref{delta}) immediately imply that every monomial $T_i^{l_i}$ has exactly one non-kernel variable~$T_{ic_i}$. Hence,
\begin{equation}\label{division}
{\partial T_i^{l_i}\over\partial T_{ic_i}} \quad\text{divides}\quad\,f\quad\text{for all $i$}.
\end{equation}
By Proposition~\ref{PropHausen}, the algebra $R(A,P_0)$ is $K_0$-factorial, and variables $T_{ij}$ are $K_0$-primes. Then, by (\ref{division}), 
$$
f = g \Prod_{k=1}^r {\partial T_k^{l_k}\over\partial T_{kc_k}}
$$
with $g\in R(A,P_0)$ a $K_0$-homogeneous element. Now it follows by~(\ref{fdef}) and (\ref{delta}) that
$$
\delta(T_{ic_i}) = g \Prod_{k\neq i} {\partial T_k^{l_k}\over\partial T_{kc_k}} \qquad\text{for all $i$}.
$$
Lemma~\ref{nonKerDegLem} asserts that $l_{ic_i}=1$ for all $i$ with exception of at most one. Thus, $\delta = g\delta_C$ with $C=(c_1,\ldots,c_r)$ %is the tuple of indexes of nonkernel variables, 
and $\delta_C$ defined in Construction~\ref{ConstrDer}, item 1. Lemma~\ref{BasicLem} implies that $g\in\Ker\delta_C$. To sum up, $\delta$ is elementary.
\end{proof}

Now we should focus on trinomial algebras of type 2. As for type 1, it suffices to threat only the case of proper derivation.

\begin{remark}\label{DimRemark}
    Let $R(A, P_0)$ be of type~2. By equation $\mathfrak g_i=0$, any monomial $T_i^{l_i}$ is a linear combination of two any fixed monomials $T_p^{l_p}$ and $T_q^{l_q}$. Evidently, for any derivation $\delta$, one has 
    $$
    \Lin\left\{\delta(T_i^{l_i}) \,|\, i=0,\ldots,r\right\} = \Lin\left\{\delta(T_p^{l_p}), \,\delta(T_q^{l_q})\right\} \qquad\mbox{for any $p\neq q$}.
    $$
    %а потому 
    %$$
    %\dim_{\mathbb K} \Lin_{\mathbb K}\left\{\delta(T_i^{l_i}) \,|\, %i=0,\ldots,r\right\} \le 2. 
    %$$
\end{remark}

\begin{stat}\label{DimStat}
     Let $R(A, P_0)$ be of type~2, $\delta$ a proper $K_0$-homogeneous locally nilpotent derivarion of $R(A, P_0)$. If 
    $$
    \dim \Lin\left\{ \delta(T_i^{l_i}) \,|\, i=0,\ldots,r\right\} =1,
    $$
    then $\delta$ is elementary.
\end{stat}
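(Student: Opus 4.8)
The plan is to exploit the one-dimensionality of the span directly. Since $\dim\Lin\{\delta(T_i^{l_i})\mid i=0,\ldots,r\}=1$, I would fix a nonzero element $f$ spanning this line and write $\delta(T_i^{l_i})=\gamma_i f$ with scalars $\gamma_i\in\mathbb K$ not all zero. Because $\delta$ is $K_0$-homogeneous and the $T_i^{l_i}$ are homogeneous, $f$ can be taken homogeneous, and $R(A,P_0)$ being a $\mathbb K$-domain (Proposition~\ref{PropHausen}) will let me cancel $f$ freely.

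First I would locate the tuple $\gamma=(\gamma_0,\ldots,\gamma_r)$. Applying $\delta$ to each defining relation $\mathfrak g_i=0$ and using that $\delta$ annihilates the constant columns $a_i$, the cofactor expansion of $\mathfrak g_i$ along its first row gives $\det(a_{i+1},a_{i+2})\,\delta(T_i^{l_i})-\det(a_i,a_{i+2})\,\delta(T_{i+1}^{l_{i+1}})+\det(a_i,a_{i+1})\,\delta(T_{i+2}^{l_{i+2}})=0$. Substituting $\delta(T_j^{l_j})=\gamma_j f$ and cancelling $f$ shows that each triple $(\gamma_i,\gamma_{i+1},\gamma_{i+2})$ satisfies the same minor relation as the columns $a_i,a_{i+1},a_{i+2}\in\mathbb K^2$; as $i$ ranges over $I$ these relations cut out exactly the two-dimensional row space of the matrix $A$, so $\gamma$ is a nonzero element of it. Because the columns $a_i$ are pairwise linearly independent, a nonzero element of this row space can vanish in at most one coordinate, which produces precisely the dichotomy of Construction~\ref{ConstrDer}, item~2: either all $\gamma_i\neq0$ (case~2.1) or $\gamma_{i_0}=0$ for a single index $i_0$ (case~2.2). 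I then set $\beta:=\gamma$.

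Next I would reconstruct $\delta$ on generators. For each $i$ with $\gamma_i\neq0$ we have $\delta(T_i^{l_i})\neq0$, so $T_i^{l_i}\notin\Ker\delta$ and, by factorial closedness of $\Ker\delta$ (Lemma~\ref{BasicLem}) together with Lemma~\ref{nonKerLem}, the monomial $T_i^{l_i}$ has exactly one non-kernel variable $T_{ic_i}$; then $\delta(T_i^{l_i})=D_i\,\delta(T_{ic_i})$ with $D_i:=\partial T_i^{l_i}/\partial T_{ic_i}$, whence $D_i\mid f$. For an index with $\gamma_i=0$ the whole row lies in $\Ker\delta$ and $c_i$ may be chosen arbitrarily. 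Since the $D_i$ over distinct rows involve disjoint variables, Proposition~\ref{PropHausen} makes them pairwise coprime homogeneous elements, so their product divides $f$: writing $f=g\prod_{i:\gamma_i\neq0}D_i$ with $g$ homogeneous, I read off $\delta(T_{ic_i})=\gamma_i\,g\prod_{k\neq i,\ \gamma_k\neq0}D_k$. Comparing with Construction~\ref{ConstrDer}, this is exactly $g\,\delta_{C,\beta}$ on every generator, the product running over all $k\neq i$ in case~2.1 and over all $k\neq i,i_0$ in case~2.2. Lemma~\ref{nonKerDegLem} forces $l_{ic_i}=1$ for the relevant $i$ with at most one exception, so $\delta_{C,\beta}$ is a legitimate derivation of Construction~\ref{ConstrDer}; and since $\delta=g\,\delta_{C,\beta}$ is locally nilpotent, Lemma~\ref{BasicLem}(i) yields $g\in\Ker\delta_{C,\beta}$. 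Hence $\delta$ is elementary.

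The main obstacle I anticipate is the bookkeeping in the factorization step: justifying $\prod D_i\mid f$ rigorously via $K_0$-factoriality and disjoint supports, and then checking that the recovered formulas for $\delta(T_{ic_i})$ match $\delta_{C,\beta}$ termwise in both cases at once, including the degenerate row $i_0$ where the vanishing $\gamma_{i_0}=0$ must correspond to $\beta_{i_0}=0$. The conceptual crux, however, is the identification of $\gamma$ as a nonzero element of the row space of $A$ together with the ``at most one zero coordinate'' observation, since this is exactly what certifies that the scalars form an admissible $\beta$ and selects between the two subcases of Construction~\ref{ConstrDer}, item~2.
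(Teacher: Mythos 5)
Your proof is correct and uses essentially the same approach as the paper: writing $\delta(T_i^{l_i})=\beta_i f$, extracting the unique non-kernel variable per monomial via Lemmas~\ref{nonKerLem} and~\ref{nonKerDegLem}, factoring $f = g\prod_k \partial T_k^{l_k}/\partial T_{kc_k}$ by $K_0$-factoriality, and concluding $\delta = g\,\delta_{C,\beta}$ with $g\in\Ker\delta_{C,\beta}$ via Lemma~\ref{BasicLem}. The only difference is organizational: you establish that $\beta$ lies in the row space of $A$ first and deduce the dichotomy of Construction~\ref{ConstrDer} (all $\beta_i\neq 0$ versus exactly one zero) from that, whereas the paper splits into cases according to whether some monomial has no non-kernel variable and verifies the row-space condition at the end --- the ingredients and conclusion are identical.
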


\begin{proof}
    According to Lemma~\ref{nonKerLem}, any monomial $T_i^{l_i}$ has at most one non-kernel variable. We have the following two cases.

        \textit{Case a.} Any monomial $T_i^{l_i}$ has exactly one non-kernel variable. For all $i$, define $c_i$ by the condition $\delta(T_{ic_i})\neq 0$. The Leibniz rule implies that
        $$
        \delta(T_i^{l_i}) = {\partial T_i^{l_i}\over\partial T_{ic_i}} \delta(T_{ic_i}).
        $$
        Since all $\delta(T_i^{l_i})$ lie in an one-dimensional subspace, one has
        $$
        {\partial T_i^{l_i}\over\partial T_{ic_i}} \delta(T_{ic_i}) = \beta_i f
        $$
        for some
        $\beta_i\in\mathbb K^{\times}$, $f\in R(A, P_0)$ and all $i$. Thus
        $$
        {\partial T_i^{l_i}\over\partial T_{ic_i}}\quad\text{divides}\quad f\quad\text{for any $i$.}
        $$
         As all $T_{ij}$ are $K_0$-prime and $R(A, P_0)$ is $K_0$-factorial, we have:
        $$
        f = g \Prod_k {\partial T_k^{l_k}\over\partial T_{kc_k}} = \beta_i^{-1}{\partial T_i^{l_i}\over\partial T_{ic_i}}\delta(T_{ic_i})
        $$
        for some $g\in R(A,P_0)$ and all $i$. Then for all $i$
        $$
        \delta(T_{ic_i}) = \beta_i g \Prod_{k\neq i} {\partial T_k^{l_k}\over\partial T_{kc_k}}.
        $$

        Denote $C = (c_0, \ldots, c_r)$, $\beta = (\beta_0,\ldots,\beta_r)$. We claim that the derivation $\delta_{C,\beta}$ is well defined, namely, the tuples $C,\beta$ satisfy the conditions in item~2.1 of Construction~\ref{ConstrDer}. Indeed, by Lemma~\ref{nonKerDegLem}, we have $l_{ic_i}=1$ or $l_{jc_j}=1$ for any pair $i\neq j$. Furthermore, the following calculation shows that the vector $\beta$ lies in the row space of the matrix~$A$:
        \begin{multline*}
        0 = \delta(\mathfrak g_i)
        =\delta\left( \det \begin{pmatrix}
        T_i^{l_i} & T_{i+1}^{l_{i+1}} & T_{i+2}^{l_{i+2}}\\
        a_i & a_{i+1} & a_{i+2}
        \end{pmatrix} \right) =\\
        =\det \begin{pmatrix}
        \delta(T_i^{l_i}) & \delta(T_{i+1}^{l_{i+1}}) & \delta(T_{i+2}^{l_{i+2}})\\
        a_i & a_{i+1} & a_{i+2}
        \end{pmatrix} =
        %=\det \begin{pmatrix}
        %\beta_i f & \beta_{i+1} f & \beta_{i+2} f\\
        %a_i & a_{i+1} & a_{i+2}
        %\end{pmatrix}
        \det \begin{pmatrix}
        \beta_i & \beta_{i+1} & \beta_{i+2}\\
        a_i & a_{i+1} & a_{i+2}
        \end{pmatrix} f.
        \end{multline*}
        
        Hence, the derivation $\delta_{C,\beta}$ is well defined. By Lemma~\ref{BasicLem}, $g\delta_{C,\beta}$ is a derivation, and it is locally nilpotent if and only if $g\in\Ker\delta_{C,\beta}$. Obviously, the assignments
        $\delta$ and $g\delta_{C,\beta}$ coincide for the generators~$T_{ij},S_k$, and consequently $\delta=g\delta_{C,\beta}$. 
        Thus, $\delta$ is elementary.

    \textit{Case b.} There exists a monomial $T_{i_0}^{l_{i_0}}$ that has no non-kernel variable. Then $i_0$ is defined uniquely. Indeed, if another monomial $T_{i_1}^{l_{i_1}}$ has no non-kelnel variable, by Remark~\ref{DimRemark} one has
    $$
    \Lin\left\{\delta(T_i^{l_i}) \,|\, i=0,\ldots,r\right\} = \Lin\left\{\delta(T_{i_0}^{l_{i_0}}), \,\delta(T_{i_1}^{l_{i_1}})\right\} = 0.
    $$
    This is impossible because $\delta\neq0$ (since $\delta$ is proper by assumption).

    Hence, $\delta(T_{i_0 j})=0$ for all $j$. And for all $i\neq i_0$, the argument in the \textit{case a} is workable. Namely, for any $i\neq i_0$ there exists $c_i$ with
    $$
    \delta(T_{ij}) = 
    \begin{cases}
        \beta_i g \Prod\limits_{k\neq i, i_0} {\partial T_k^{l_k}\over\partial T_{kc_k}}, &j=c_i,\\
        0, &j\neq c_i
    \end{cases}
    $$
    for some $\beta_i\in\mathbb{K^\times}$ and $g\in R(A, P_0)$. 
    
    Set $c_{i_0}$ arbitrary, $\beta_{i_0}=0$. One can check, similar to the \textit{case a}, that the tuples $C=(c_0,\dots,c_r),$ $\beta = (\beta_1,\ldots,\beta_r)$ satisfy the conditions in Construction~\ref{ConstrDer}, item~2.2; i.e., the derivation $\delta_{C,\beta}$ is well defined. As in the \textit{case a}, one obtains $\delta = g\delta_{C,\beta}$ and $g\in\Ker\delta_{C,\beta}$, so $\delta$ is elementary. 
\end{proof}

    The following lemma is proved in~\cite[Lemma 9]{GZ}.

\begin{lemma}\label{G-Z_Lem}
    Let $\partial$ be a $K_0$-homogeneous locally nilpotent derivation on the trinomial variety
    $$
    \{\,x^k+y+z=0\,\} \subset \mathbb K^3.
    $$
    Then 
    $$
    \dim\Lin\{\partial(x^k),\,\partial(y),\,\partial(z)\} = 1.
    $$
\end{lemma}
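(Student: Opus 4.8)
The plan is to identify the variety with the affine plane and then use the structure theory of locally nilpotent derivations on $\mathbb K[x,y]$. Since $x^k+y+z=0$, we have $z=-x^k-y$, so $\mathbb K[V]=\mathbb K[x,y,z]/(x^k+y+z)\cong\mathbb K[x,y]$, and under this graded isomorphism $K_0\cong\mathbb Z$ with $\deg x=1$ and $\deg y=\deg z=k$. Applying $\partial$ to the relation gives $\partial(x^k)+\partial(y)+\partial(z)=0$, so the three elements are linearly dependent and the dimension in question is at most $2$. If all three vanished, then $\partial(x^k)=kx^{k-1}\partial(x)=0$ would force $\partial(x)=0$ (as $\mathbb K[x,y]$ is a domain), and together with $\partial(y)=0$ this gives $\partial=0$; hence for $\partial\neq0$ (the case of interest, as the statement is understood for nonzero $\partial$) the dimension is at least $1$. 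Because $\partial(x^k)=kx^{k-1}\partial(x)$ and $\partial(z)=-kx^{k-1}\partial(x)-\partial(y)$, the span equals $\Lin\{x^{k-1}\partial(x),\,\partial(y)\}$, so it remains to prove that $x^{k-1}\partial(x)$ and $\partial(y)$ are $\mathbb K$-proportional.

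For this I would invoke Rentschler's theorem: the kernel of the nonzero LND $\partial$ on $\mathbb K[x,y]$ is $\Ker\partial=\mathbb K[f]$ for a coordinate $f$ of $\mathbb K[x,y]$. Since $\partial$ is $K_0$-homogeneous, $\Ker\partial$ is a graded subalgebra of $\mathbb K[x,y]$; as it is isomorphic to a univariate polynomial ring and $\mathbb K[x,y]$ is nonnegatively graded with degree-zero part $\mathbb K$, a generator $f$ may be taken homogeneous. Writing $g=\gcd(\partial(x),\partial(y))$ in the UFD $\mathbb K[x,y]$ and $\partial=g\tilde\partial$, the derivation $\tilde\partial$ is the associated irreducible LND, with $\Ker\tilde\partial=\Ker\partial=\mathbb K[f]$ and $g\in\Ker\tilde\partial$ by Lemma~\ref{BasicLem}(i). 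From $\tilde\partial(f)=f_x\tilde\partial(x)+f_y\tilde\partial(y)=0$ together with $\gcd(\tilde\partial(x),\tilde\partial(y))=1$ and $\gcd(f_x,f_y)=1$ (the latter because a coordinate is prime with nowhere-vanishing gradient, so $(f_x,f_y)=(1)$), I obtain $\tilde\partial(x)=-c\,f_y$ and $\tilde\partial(y)=c\,f_x$ for a constant $c\in\mathbb K^\times$; that is, $\partial(x)=-cg\,f_y$ and $\partial(y)=cg\,f_x$.

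Finally I would classify the homogeneous coordinates. A homogeneous $f\in\mathbb K[x,y]$ of degree $e$ has all monomials $x^{e-kb}y^{b}$, so it factors over the algebraically closed field $\mathbb K$ as $f=x^{s}\prod_j(\beta_j y+\alpha_j x^{k})$ with $s=e\bmod k$; being irreducible (as every coordinate is prime), $f$ must equal, up to a scalar, either $x$ or $y+\lambda x^{k}$ for some $\lambda\in\mathbb K$. In the first case $f_y=0$, so $\partial(x)=0$ and $x^{k-1}\partial(x)=0$, whence $\Lin\{x^{k-1}\partial(x),\partial(y)\}$ is at most one-dimensional. In the second case $f_x=k\lambda x^{k-1}$ and $f_y=1$, so $x^{k-1}\partial(x)=-cg\,x^{k-1}$ and $\partial(y)=cg\,k\lambda x^{k-1}$ are both multiples of $g\,x^{k-1}$, again giving dimension at most $1$. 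Combined with the lower bound, the dimension is exactly $1$.

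I expect the main obstacle to be the second paragraph: pinning down the precise shape of $\partial$ relies on the two-dimensional structure theory (Rentschler's theorem, and the fact that a coordinate is prime with coprime partial derivatives) and on checking that the kernel generator may be chosen homogeneous. Once that is in hand, the classification of homogeneous coordinates in the last paragraph is an elementary factorization, and the remaining steps are bookkeeping with the Leibniz rule applied to the defining relation.
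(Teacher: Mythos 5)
Your proof is correct; note, though, that the paper contains no internal proof of this lemma to compare against---it is imported verbatim from \cite[Lemma~9]{GZ}, so you have supplied an argument where the paper only cites one. Your route (identify the surface with $\mathbb A^2$ so that $K_0\cong\mathbb Z$ with $\deg x=1$, $\deg y=\deg z=k$; reduce the claim to proportionality of $x^{k-1}\partial(x)$ and $\partial(y)$; then combine Rentschler's theorem with the observation that the only homogeneous irreducible elements of $\mathbb K[x,y]$ are, up to scalar, $x$ and $y+\lambda x^{k}$) is sound and self-contained. Your reading of the statement---that it is meant for $\partial\neq 0$, as in its application in Proposition~\ref{forDimStat}, where $\delta$ is proper---is also the right one, since for $\partial=0$ the dimension would be $0$.

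Two points deserve tightening. First, the assertion that the kernel generator may be chosen homogeneous is true but is doing real work and needs its (short) proof: since $\partial$ is homogeneous, $\Ker\partial=\mathbb K[f]$ is a graded subalgebra, so the top homogeneous component $f_{\mathrm{top}}$ of $f$ lies in $\mathbb K[f]$, say $f_{\mathrm{top}}=P(f)$; comparing top homogeneous components (the grading is nonnegative with degree-zero part $\mathbb K$, and $f$ is nonconstant) forces $\deg P=1$, hence $f$ differs from a scalar multiple of $f_{\mathrm{top}}$ by a constant and $\mathbb K[f]=\mathbb K[f_{\mathrm{top}}]$. Second, your appeal to Lemma~\ref{BasicLem}(i) to conclude $g\in\Ker\tilde\partial$ is circular as written: that statement presupposes that $\tilde\partial$ is locally nilpotent, which you have not established at that point. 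Fortunately this conclusion is never needed: your endgame uses only $\tilde\partial(f)=0$ (divide $\partial(f)=0$ by $g$ in the domain), $\gcd\bigl(\tilde\partial(x),\tilde\partial(y)\bigr)=1$ (by construction), and $\gcd(f_x,f_y)=1$ (because $f$ is a coordinate). If you do want $g\in\Ker\partial$, deduce it from Lemma~\ref{BasicLem}(iii) instead: $g$ divides $\partial(g)=g\,\tilde\partial(g)$, hence $\partial(g)=0$.
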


Given a $G$-graded $\mathbb K$-domain $R$, the subfield of degree zero fraction in the fraction field is defined as
$$
\Quot(R)_0 = \left\{ \left. {f\over g}\,\right|\,f,g\in R \text{ are $G$-homogeneous, } \quad g\neq 0,\quad \deg f = \deg g  \right\}.
$$

The following lemma is proved in~\cite[Proposition 3.4]{AHHL}.

\begin{lemma}\label{QuotLem}
    Let $R(A, P_0)$ be a trinomial algebra of type~2. Fix any $0 \le i,j\le r, i\neq j$. Then
    $$
    \Quot(R(A, P_0))_0 = \mathbb K \left({T_i^{l_i}\over T_j^{l_j}}\right).
    $$
\end{lemma}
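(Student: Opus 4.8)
The plan is to prove the two inclusions separately, writing $u = T_i^{l_i}/T_j^{l_j}$. Since $T_i^{l_i}$ and $T_j^{l_j}$ are both $K_0$-homogeneous of the common degree $\mu$, the quotient $u$ has degree $0$, so $u \in \Quot(R(A,P_0))_0$. As $\Quot(R(A,P_0))_0$ is a subfield of the fraction field (the inverse of a nonzero degree-zero fraction is again degree zero), it contains the whole field $\mathbb K(u)$, giving $\mathbb K(u) \subseteq \Quot(R(A,P_0))_0$ at once. All the work is in the reverse inclusion.

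For that, I would take an arbitrary $\varphi = f/g$ with $f,g$ $K_0$-homogeneous of equal degree and decompose both factors. By the trinomial relations, every $T_p^{l_p}$ is a $\mathbb K$-linear combination of the two linearly independent elements $T_i^{l_i}, T_j^{l_j}$, so Lemma~\ref{KerLem} (applied with the pair $i,j$, which is legitimate since $\Lin\{T_p^{l_p}\}$ is two-dimensional) lets me write $f = F(T_i^{l_i}, T_j^{l_j})\, M$ and $g = G(T_i^{l_i}, T_j^{l_j})\, N$, where $F,G$ are homogeneous polynomials in two variables of degrees $s,s'$ and $M,N$ are monomials in the $T_{ab}$ and $S_k$. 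A homogeneous polynomial of degree $s$ in $X,Y$ equals $Y^s \widetilde F(X/Y)$ for a one-variable polynomial $\widetilde F$, so $F(T_i^{l_i},T_j^{l_j}) = (T_j^{l_j})^{s}\,\widetilde F(u)$ and likewise for $G$. Hence $\varphi = \bigl(\widetilde F(u)/\widetilde G(u)\bigr)\cdot\bigl((T_j^{l_j})^{\,s-s'}\,M/N\bigr)$, where the first factor already lies in $\mathbb K(u)$. Since $\varphi$ and $\widetilde F(u)/\widetilde G(u)$ both have degree $0$, the remaining factor $(T_j^{l_j})^{\,s-s'}\,M/N$ is a Laurent monomial in the $T_{ab},S_k$ of $K_0$-degree $0$.

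The crux is therefore to show that every degree-zero Laurent monomial lies in $\mathbb K(u)$. The group of such monomials is exactly $\ker Q = \im(P_0^T)$; reading off the rows of $P_0$ for type~2, this group is freely generated by the exponent vectors corresponding to $w_p := T_p^{l_p}/T_0^{l_0}$ for $p=1,\ldots,r$ (no $S_k$ can occur, as the last $m$ columns of $P_0$ vanish). Thus $(T_j^{l_j})^{\,s-s'}\,M/N = \prod_{p} w_p^{\,e_p}$ for some integers $e_p$, and it suffices to check $w_p \in \mathbb K(u)$ for each $p$. For this I use the linear relation $T_p^{l_p} = \alpha_p T_i^{l_i} + \beta_p T_j^{l_j}$ coming from the vanishing determinants $\mathfrak g_\bullet$; taking $p=0$ gives $T_0^{l_0} = \alpha_0 T_i^{l_i} + \beta_0 T_j^{l_j}$ with $(\alpha_0,\beta_0)$ the coordinates of $a_0$ in the basis $a_i,a_j$ of $\mathbb K^2$, not both zero. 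Dividing numerator and denominator by $T_j^{l_j}$ yields $w_p = (\alpha_p u + \beta_p)/(\alpha_0 u + \beta_0) \in \mathbb K(u)$. Combining the two factors, $\varphi \in \mathbb K(u)$, which gives $\Quot(R(A,P_0))_0 \subseteq \mathbb K(u)$ and hence equality.

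The step I expect to be the main obstacle is the identification of the degree-zero Laurent monomials with $\im(P_0^T)$ together with the verification that $T_i^{l_i}$ and $T_j^{l_j}$ are genuinely linearly independent over $\mathbb K$ (so that $\Lin\{T_p^{l_p}\}$ is two-dimensional and the change of two-variable coordinates in Lemma~\ref{KerLem} is valid). Both points require unwinding the precise shape of $P_0$ and the pairwise linear independence of the columns of $A$, rather than any genuinely new idea.
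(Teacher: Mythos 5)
Your proof is correct, but it takes a genuinely different route from the paper, which offers no argument of its own: Lemma~\ref{QuotLem} is justified there only by a citation of \cite[Proposition 3.4]{AHHL}. You instead derive it inside the paper's framework: Lemma~\ref{KerLem} reduces a degree-zero fraction $f/g$ to a factor $\widetilde F(u)/\widetilde G(u)\in\mathbb K(u)$ times a degree-zero Laurent monomial in the $T_{ab}$, $S_k$; the exponent vector of such a monomial lies in $\ker Q=\im(P_0^T)$ by the very definition of $K_0$ (this is immediate, not an obstacle --- and since the last $m$ columns of $P_0$ vanish, no $S_k$ occurs); hence the monomial is a product of integer powers of $w_p=T_p^{l_p}/T_0^{l_0}$, and the defining relations turn each $w_p$ into the M\"obius expression $(\alpha_p u+\beta_p)/(\alpha_0 u+\beta_0)$. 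The two points you flag as needing verification are real but close easily with material already in the paper: expanding each relation $\mathfrak g_q=0$ along its first row and using $\det(a_q,a_{q+1})\neq 0$ gives, by induction, $T_p^{l_p}=\alpha_p T_i^{l_i}+\beta_p T_j^{l_j}$ whenever $a_p=\alpha_p a_i+\beta_p a_j$, which simultaneously legitimizes the change of variables in Lemma~\ref{KerLem} (the transition matrix between the pairs $(T_1^{l_1},T_2^{l_2})$ and $(T_i^{l_i},T_j^{l_j})$ is invertible because $a_1,a_2$ are linearly independent) and supplies the M\"obius identities; and linear independence of $T_i^{l_i}$, $T_j^{l_j}$ over $\mathbb K$ follows from Proposition~\ref{PropHausen}: if $T_i^{l_i}=cT_j^{l_j}$ with $c\neq 0$, the $K_0$-prime $T_{i1}$ would divide $T_j^{l_j}$ and hence be associated to some $T_{jk}$, contradicting item (iii). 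What your route buys is a proof self-contained within the paper (modulo Lemma~\ref{KerLem}, which the paper itself only extracts from the proof of \cite[Proposition 3.5]{AHHL}, so complete independence from that reference is not achieved); what the paper's citation buys is brevity.
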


\begin{stat}\label{forDimStat}
    Let $\delta$ be a proper $K_0$-homogeneous locally nilpotent derivation of a trinomial algebra $R(A, P_0)$ of type~2. Then
    $$
   \dim \Lin\left\{ \delta(T_i^{l_i}) \,|\, i=0,\ldots,r\right\} =1.
    $$
\end{stat}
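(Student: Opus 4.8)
The plan is to split the argument according to how many of the monomials $T_i^{l_i}$ lie entirely in $\Ker\delta$, and to reduce the generic configuration to the hypersurface Lemma~\ref{G-Z_Lem}.

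First I would note that the span is nonzero and classify the monomials. If $\delta(T_i^{l_i})=0$ for every $i$, then each $T_i^{l_i}\in\Ker\delta$, so by factorial closedness (Lemma~\ref{BasicLem}) every $T_{ij}\in\Ker\delta$, and since $\delta$ is proper also every $S_k\in\Ker\delta$, forcing $\delta=0$; hence the dimension is at least $1$. By Lemma~\ref{nonKerLem} each $T_i^{l_i}$ carries at most one non-kernel variable; I call $T_i^{l_i}$ \emph{silent} if it carries none, so that $\delta(T_i^{l_i})=0$ precisely for the silent monomials. Two silent monomials $T_p^{l_p},T_q^{l_q}$ cannot coexist, since Remark~\ref{DimRemark} would then give $\Lin\{\delta(T_i^{l_i})\}=\Lin\{\delta(T_p^{l_p}),\delta(T_q^{l_q})\}=0$ and again $\delta=0$.

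If exactly one monomial $T_{i_0}^{l_{i_0}}$ is silent, I would finish directly. The index $i_0$ occurs in some defining relation $\mathfrak g$ together with two further monomials $T_a^{l_a},T_b^{l_b}$, and the linear dependence expressed by $\mathfrak g=0$ has nonzero coefficients because the columns of $A$ are pairwise independent. Applying $\delta$ and using $\delta(T_{i_0}^{l_{i_0}})=0$ yields a nontrivial relation $c_a\delta(T_a^{l_a})+c_b\delta(T_b^{l_b})=0$, so these two values are proportional; Remark~\ref{DimRemark} then bounds the whole span by $1$, and with the previous paragraph the dimension is exactly $1$.

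The main case is that every $T_i^{l_i}$ has exactly one non-kernel variable $T_{ic_i}$. Here I would collapse each monomial to a single power by passing to $B:=R(A,P_0)/J$, where $J$ is generated by the elements $T_{ij}-1$ with $j\neq c_i$ and all $S_k-1$. These generators lie in $\Ker\delta$, so $\delta(J)\subseteq J$ and $\delta$ descends to a locally nilpotent $\bar\delta$ on $B$, in which $T_i^{l_i}$ becomes $x_i^{k_i}$ with $x_i$ the image of $T_{ic_i}$ and $k_i=l_{ic_i}$. Lemma~\ref{nonKerDegLem} forbids two exponents $k_i$ from exceeding $1$, and eliminating the variables of exponent $1$ by means of the relations yields $B\cong\mathbb K[x,y,z]/(x^k+y+z)$, a trinomial hypersurface of the shape in Lemma~\ref{G-Z_Lem}. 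The step I expect to be the main obstacle is that $J$ is not homogeneous for $K_0$, so it is not a priori clear that $\bar\delta$ is homogeneous for the \emph{finest} grading of $B$, which Lemma~\ref{G-Z_Lem} requires. I would resolve this by pulling the finest grading of $B$ back to $R(A,P_0)$: assign to $T_{ic_i}$ the weight of $x_i$ and weight $0$ to every other variable. Then each $T_i^{l_i}$ acquires one common weight, the relations $\mathfrak g_i$ remain homogeneous, and Lemma~\ref{ZLemma} shows the given $\delta$ is homogeneous for this grading, which descends on $B$ to exactly its finest grading; hence $\bar\delta$ is homogeneous as needed.

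It remains to transfer the dimension. By Lemma~\ref{KerLem} each $\delta(T_i^{l_i})$ equals $F_i\,M$ for a common monomial $M$ (forced by the equality of degrees) and binary forms $F_i$ of a fixed degree in two reference monomials, so $\dim\Lin\{\delta(T_i^{l_i})\}=\dim\Lin\{F_i\}$. Choosing the two reference monomials among the three that survive in $B$, their images are algebraically independent (as $\dim B=2$), so the substitution is injective on forms and $\dim\Lin\{F_i\}=\dim\Lin\{\bar\delta(\bar T_i^{l_i})\}$. Lemma~\ref{G-Z_Lem} together with Remark~\ref{DimRemark} applied inside $B$ makes the latter dimension equal to $1$, which completes the proof.
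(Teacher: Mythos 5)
Your argument is correct in substance, and in the main case (every monomial carries exactly one non-kernel variable) it takes a genuinely different route from the paper. The paper goes \emph{up}: it adjoins all kernel variables to the ground field, forming $\mathbb L = \overline{\mathbb K(T_{ij}, S_k \,|\, j\neq c_i)}$, embeds $R(A,P_0)\hookrightarrow \mathbb L[T_{ic_i}]/\mathbb L\mathfrak I \cong \mathbb L[x,y,z]/(x^{k}+y+z)$, applies Lemma~\ref{G-Z_Lem} over $\mathbb L$, and then must \emph{descend} the resulting proportionality constant $\lambda\in\mathbb L$ back to $\mathbb K$, which is where Lemma~\ref{QuotLem} ($\Quot(R)_0=\mathbb K\bigl(T_0^{l_0}/T_1^{l_1}\bigr)$) and the field-embedding argument enter. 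You go \emph{down}: specialize the kernel variables to $1$, i.e.\ pass to $B=R(A,P_0)/J$, apply Lemma~\ref{G-Z_Lem} over $\mathbb K$ itself, and \emph{lift} one-dimensionality back along the quotient. Since your quotient map is not injective, the lifting step replaces the paper's descent step, and it rests on two points you state a bit too quickly: (a) that all $\delta(T_i^{l_i})$, lying in one homogeneous component, admit decompositions as binary forms of a common degree times a \emph{common} monomial $M$ --- this is true, but it is not ``forced by the equality of degrees'' alone; one needs the one-dimensionality of the component of the monomial factor in the AHHL decomposition, exactly as in the proof of Lemma~\ref{KerLem}; and (b) algebraic independence in $B$ of the images of the two reference monomials --- ``$\dim B=2$'' by itself proves nothing; what does it is that any two monomial images are obtained from the algebraically independent pair $x^{k}, z$ by an invertible $2\times 2$ matrix, thanks to pairwise linear independence of the columns of $A$. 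Your treatment of the homogeneity hypothesis of Lemma~\ref{G-Z_Lem} --- pulling back the finest grading of $B$ along the quotient and invoking Lemma~\ref{ZLemma} --- is correct and is actually \emph{more} explicit than the paper, which applies the lemma to $\partial$ over $\mathbb L$ without commenting on homogeneity (the same pull-back-grading argument is what justifies it there too). Net comparison: your specialization stays over $\mathbb K$ and avoids Lemma~\ref{QuotLem} and the surjectivity/field-embedding argument entirely, at the cost of a more delicate transfer through a non-injective map; the paper's extension of scalars keeps an embedding, so dependence relations transfer trivially in one direction, at the cost of having to prove $\lambda\in\mathbb K$ afterwards.
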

\begin{proof}
    Lemma~\ref{nonKerLem} claims that $\delta$ has at mot one non-kernel variable in every monomial $T_i^{l_i}$. If a monomial $T_p^{l_p}$ has no non-kernel variable, it follows by Remark~\ref{DimRemark} that for any $q\neq p$ one has
    $$
    \dim \Lin\left\{ \delta(T_i^{l_i}) \,|\, i=0,\ldots,r\right\} = \dim\Lin\left\{ \delta(T_p^{l_p}),\, \delta(T_q^{l_q})\right\} = 1.
    $$
    
    It suffices to threat the case when every monomial $T_i^{l_i}$ has exactly one non-kernel variable. According to Lemma~\ref{nonKerDegLem}, for non-kernel exponents we have $l_{ic_i}=1$ except at most one $i=i_0$. Let us suppose      
    \begin{equation*}    
    T_{i1}\notin\Ker\delta \quad\mbox{for all $i$}\quad\qquad \mbox{and}\quad\qquad l_{i1}=1 \quad\mbox{for all $i>0$}.
    \end{equation*}
    In other words, any non-kernel variable is the first in its monomial, and non-unit non-kernel exponent is allowed only in the zeroth monomial $T_0^{l_0}$.
    
    Let $\mathbb L$ be the algebraic closure of the field obtained from $\mathbb K$ by adjunction of all kernel variables:
    $$
    \mathbb L = \overline{\mathbb K(T_{ij}, S_k\,|\, 0\le i \le r,\, 2\le j\le n_i,\, 1\le k\le m)} \subseteq \overline{\Quot R(A, P_0)}.
    $$
    Denote also the polynomial algebra in $(r+1)$ variables by
    $$
    \mathbb L[T_{i1}] := \mathbb L[T_{01},\ldots,T_{r1}].
    $$
    Consider the variety $V\subseteq\mathbb L^{r+1}$ defined by the ideal generated by $\mathfrak g_i,\, i\in I$. Evidently this ideal is $\mathbb L \mathfrak I$, where $\mathfrak I$ is the ideal of the original trinomial variety, and $\mathbb L[V] = \mathbb L[T_{i1}]/\mathbb L \mathfrak I$. We have the embedding
    $$
    R(A, P_0) = \mathbb K[T_{ij}, S_k]/\mathfrak I \hookrightarrow\mathbb L[T_{i1}]/\mathbb L \mathfrak I = \mathbb L[V],
    $$
    induced by the natural embedding $\mathbb K[T_{ij}, S_k]\hookrightarrow\mathbb L[T_{i1}]$. As $\delta(T_{ij})=\delta(S_k)=0$ for $j\neq 1$, we can consider $\delta$ to be a locally nilpotent derivation for $ \mathbb L[T_{i1}]/\mathbb L \mathfrak I$ by setting $\delta(\mathbb L)=0$.
    By the equations $\mathfrak g_i=0$, all $T_{i1}^{l_{i1}}$ are linear combinations of $T_{01}^{l_{01}}, \,T_{11},\,T_{21}$ in $ \mathbb L[T_{i1}]/\mathbb L \mathfrak I$. 
    
    Let $\mathfrak g_0 = aT_{01}^{l_{01}}+bT_{11}+cT_{21}$ with $a,b,c\in\mathbb L$. The assignment 
    $$
    T_{01}\mapsto a^{-l_{01}^{-1}}x,\quad T_{11}\mapsto {b^{-1}y},\quad T_{21}\mapsto {c^{-1}z}
    $$
    gives an isomorphism of varieties $V\simeq \{\,x^{l_{01}} + y + z=0\,\} \subseteq \mathbb L^3$, which transforms $\delta$ into a locally nilpotent derivation $\partial$ on the variety $\{\,x^{l_{01}} + y + z=0\,\}$. Lemma~\ref{G-Z_Lem} asserts
    $$
    \dim_{\mathbb L}\Lin_{\mathbb L}\{\partial(x^k),\,\partial(y),\,\partial(z)\}=1.
    $$
    Hence, 
    $$
    \dim_{\mathbb L}\Lin_{\mathbb L}\left\{ \delta(T_i^{l_i}) \,|\, i=0,\ldots,r\right\} =1.
    $$
    Fix any pair $p\neq q$. The argument above shows that there exists $\lambda\in\mathbb L$ with
    $$
        \delta(T_p^{l_p}) = \lambda\delta(T_q^{l_q}) \quad\text{ in $\mathbb L[T_{i1}]/\mathbb L \mathfrak I$}.
    $$
    This implies that   
    $$
        {\delta(T_p^{l_p})\over \delta(T_q^{l_q})} = \lambda \quad\text{ in $\Quot(\mathbb L[T_{i1}]/\mathbb L \mathfrak I)$}.
    $$
    Since $\deg T_p^{l_p} = \deg T_q^{l_q}$, one has $\deg\delta(T_p^{l_p}) = \deg\delta(T_q^{l_q})$, and consequently
    $$
    \lambda = {\delta(T_p^{l_p})\over \delta(T_q^{l_q})}\in \Quot(R(A, P_0))_0.
    $$
    It follows from Lemma~\ref{QuotLem} that
    $$
    \Quot(R(A, P_0))_0 = \mathbb K\left({T_0^{l_0}\over T_1^{l_1}}\right).
    $$
    Thus, there exists a polynomial $F\in\mathbb K[u]$ with
    %\begin{align*}
    $$
        F\left({T_0^{l_0}\over T_1^{l_1}}\right) = \lambda \quad\text{in $\Quot(\mathbb L[T_{i1}]/\mathbb L \mathfrak I)$}.$$
        %f_0(T_0^{l_0},T_1^{l_1}) - lf_1(T_0^{l_0},T_1^{l_1}) &= 0 &&\text{в $\mathbb L[T_{i1}]/\mathbb L \mathfrak I$};\\
        %f_0(T_0^{l_0},T_2^{l_2}) - lf_1(T_0^{l_0},T_1^{l_1}) &\in \mathbb L\mathfrak I &&\text{в $\mathbb L[T_{i1}]$};\\
        %f_0(T_0^{l_0},T_1^{l_1}) - lf_1(T_0^{l_0},T_1^{l_1}) &= \sum\limits_{j=0}^r h_j g_j &&\text{в $\mathbb L[T_{i1}]$ для некоторых $h_j\in \mathbb L[T_{i1}]$}.
    %\end{align*}
    By the form of equations $\mathfrak g_i = 0$, one can check that the rational morphism $$\varphi\colon V\dashrightarrow \mathbb L,\quad (t_{01},\cdots,t_{r1})\mapsto {t_{01}^{l_{01}}\over t_{11}^{l_{11}}}{T_{02}^{l_{02}}\cdots T_{0n_0}^{l_{0n_0}}\over T_{12}^{l_{12}}\ldots T_{1n_1}^{l_{1n_1}}}$$ is surjective, so it defines the field embedding $$\varphi^*\colon \mathbb L(u)\hookrightarrow \Quot(\mathbb L[T_{i1}]/\mathbb L \mathfrak I), \quad u\mapsto {T_0^{l_0}\over T_1^{l_1}}.$$
    %Последнее равенство выполнено в алгебре многочленов  $\mathbb L[T_{i1}]$, а значит, сохранится и при подстановке 
    %$$
    %T_{i1} = w_i(T_{11}, T_{21}), \qquad 3\le i\le r,
    %$$
    %где многочлены $w_i\in\mathbb L[u, v]$ есть выражения переменных $T_{i1}$ через $T_{11}, T_{21}$ из соотношений $g_i=0,\,i\in I,\,i\ge 1$. После описанной подстановки получим:
    %$$
    %f_0(T_0^{l_0},T_1^{l_1}) - lf_1(T_0^{l_0},T_1^{l_1}) = h_0g_0 \quad\text{в $\mathbb L[T_{i1}]$ для некоторого $h_0\in \mathbb L[T_{i1}]$}.
    %$$
    %Правая часть этого равенства нетривиально зависит от $T_{21}$, в то время как левая --- нет. Поэтому 
    %$$
    %f_0(T_0^{l_0},T_1^{l_1}) - lf_1(T_0^{l_0},T_1^{l_1}) = %0\quad\text{в $\mathbb L[T_{i1}]$}.
    %$$
    Consequently,
    $
    F(u) = \lambda
    $
    in $\mathbb L(u)$. However, $F\in\mathbb K[u]$ which implies $\lambda\in\mathbb K$.
    
    Hence,
   % Следовательно, $f_0$ и $f_1$ пропорциональны над полем $\mathbb L$ как многочлены от двух переменных. С другой стороны, это многочлены над полем $\mathbb K$. А значит, $l\in\mathbb K$ и 
   the elements $\delta(T_p^{l_p})$ and $\delta(T_q^{l_q})$ are proportional (with the coefficient $\lambda$) over the field~$\mathbb K$. As our argument is valid for any pair $p\neq q$, we have
    $$
    \dim_{\mathbb K} \Lin_{\mathbb K}\left\{ \delta(T_i^{l_i}) \,|\, i=0,\ldots,r\right\} =1.
    $$
\end{proof}

Now we are able to prove the main result for trinomial algebras of type~2.

\begin{theorem}\label{Type2Theorem}
    Let $R(A, P_0)$ be a trinomial algebra of type~2, and $\delta$ a $K_0$-homogeneous locally nilpotent derivation of $R(A, P_0)$. Then $\delta$ is elementary.
\end{theorem}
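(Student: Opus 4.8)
The plan is to assemble the machinery developed in the preceding lemmas and propositions, so that the theorem reduces to a short case analysis. First I would invoke Lemma~\ref{ST-AlterLem}, which dichotomizes any $K_0$-homogeneous locally nilpotent derivation $\delta$ into the \emph{non-proper} case (where $\delta(T_{ij})=0$ for all $i,j$) and the \emph{proper} case (where $\delta(S_k)=0$ for all $k$); the zero derivation is elementary directly from the definition, so I may assume $\delta\neq 0$. In the non-proper case, Lemma~\ref{S-lemma} already asserts that $\delta$ is elementary, and this branch is closed immediately.

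The substantive branch is the proper case. Here the key geometric fact is the one-dimensionality of $\Lin\{\delta(T_i^{l_i})\mid i=0,\ldots,r\}$, which is precisely the content of Proposition~\ref{forDimStat}. Once this is in hand, I would feed it into Proposition~\ref{DimStat}, whose hypothesis is exactly $\dim\Lin\{\delta(T_i^{l_i})\}=1$ for a proper $\delta$ on a type~2 algebra. That proposition produces the explicit description $\delta = g\,\delta_{C,\beta}$ with $g\in\Ker\delta_{C,\beta}$ homogeneous, either through Case~a (every monomial $T_i^{l_i}$ carries exactly one non-kernel variable, yielding an item~2.1 derivation) or Case~b (a unique monomial $T_{i_0}^{l_{i_0}}$ has no non-kernel variable, yielding an item~2.2 derivation with $\beta_{i_0}=0$). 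In either case $\delta$ is elementary, completing the proof.

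The genuine obstacle lies entirely inside Proposition~\ref{forDimStat}, which I am permitted to assume but which carries the analytic weight of the whole argument: reducing the ambient trinomial relation to the three-variable model $\{x^{l_{01}}+y+z=0\}$ by adjoining all kernel variables to form the algebraically closed coefficient field $\mathbb L$, then transporting the derivation to $\partial$ on that model and applying Lemma~\ref{G-Z_Lem} to get $\dim_{\mathbb L}=1$. The delicate point there is descending the proportionality constant $\lambda$ from $\mathbb L$ back to $\mathbb K$, which is achieved by recognizing $\lambda\in\Quot(R(A,P_0))_0$, identifying this field with $\mathbb K\bigl(T_0^{l_0}/T_1^{l_1}\bigr)$ via Lemma~\ref{QuotLem}, and using the surjective rational morphism $\varphi$ to force $F(u)=\lambda$ in $\mathbb L(u)$ with $F\in\mathbb K[u]$, hence $\lambda\in\mathbb K$. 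With that descent already secured, the present theorem is merely the logical closure of the case split, so I expect no further difficulty beyond correctly threading Lemma~\ref{ST-AlterLem}, Lemma~\ref{S-lemma}, Proposition~\ref{forDimStat}, and Proposition~\ref{DimStat} together.
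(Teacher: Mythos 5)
Your proposal is correct and follows exactly the same route as the paper's own proof: dispose of the non-proper case via Lemma~\ref{S-lemma}, then in the proper case chain Proposition~\ref{forDimStat} (one-dimensionality of $\Lin\{\delta(T_i^{l_i})\}$) into Proposition~\ref{DimStat} to conclude $\delta$ is elementary. Nothing further is needed.
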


\begin{proof}
If $\delta$ is a non-proper derivation, it is elementary by Lemma~\ref{S-lemma}. Let $\delta$ be proper. According to Proposition~\ref{forDimStat}, 
    $$
    \dim \Lin\left\{ \delta(T_i^{l_i}) \,|\, i=0,\ldots,r\right\} =1,
    $$
    and Proposition~\ref{DimStat} asserts that $\delta$ is elementary.
\end{proof}

Theorems~\ref{Type1Theorem} and \ref{Type2Theorem} imply the following result.

\begin{theorem}\label{ExistenсeTheorem} Consider a trinomial algebra $R(A, P_0)$. 

    Let $R(A, P_0)$ be of type~$1$. Then $R(A, P_0)$ admits a nonzero $K_0$-homogeneous locally nilpotent derivation if and only if one of the following conditions is satisfied:
    \begin{enumerate}
        \item[$(1)$] $m>0$;
        \item[$(2)$] There is such $i_0\in\{ 1,\ldots,r \}$ that for all $i\in\{ 1,\ldots,r \}\setminus\{i_0\}$ there exist $c_i\in\{1,\ldots,n_i\}$ with $l_{ic_i}=1$.
    \end{enumerate}

    Let $R(A, P_0)$ be of type~$2$. Then $R(A, P_0)$ admits a nonzero $K_0$-homogeneous locally nilpotent derivation if and only if one of the following conditions is satisfied:
    \begin{enumerate}
        \item[$(1)$] $m>0$;
        \item[$(2)$] There are at most two such $i_0,\,i_1\in\{ 0,\ldots,r \}$ that for all $i\in\{0,\ldots,r \}\setminus\{i_0, i_1\}$ there exist $c_i\in\{1,\ldots,n_i\}$ with $l_{ic_i}=1$.
    \end{enumerate}
\end{theorem}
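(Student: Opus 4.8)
The plan is to derive this existence criterion as a direct corollary of the classification already established in Theorems~\ref{Type1Theorem} and~\ref{Type2Theorem}, which assert that every $K_0$-homogeneous locally nilpotent derivation is elementary, i.e. is either zero, of the form $h\,{\partial\over\partial S_p}$, or of the form $g\delta_C$ (type~1) or $g\delta_{C,\beta}$ (type~2). Since a nonzero elementary derivation exists precisely when one of the underlying derivations from Construction~\ref{ConstrDer} exists (taking $h=1$ or $g=1$), the task reduces to spelling out exactly when each of those basic derivations is well defined. Thus I would organize the proof around the three items of Construction~\ref{ConstrDer}.

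First I would dispose of the ``only if'' direction. If $R(A,P_0)$ admits a nonzero $K_0$-homogeneous LND $\delta$, then by Theorems~\ref{Type1Theorem} and~\ref{Type2Theorem} it is elementary, so $\delta=h\delta_0$ for a nonzero basic derivation $\delta_0$ from Construction~\ref{ConstrDer}. If $\delta_0={\partial\over\partial S_p}$, then condition~$(1)$, namely $m>0$, holds. Otherwise $\delta_0$ is $\delta_C$ or $\delta_{C,\beta}$, and the defining hypothesis of the corresponding item of Construction~\ref{ConstrDer} is exactly the combinatorial condition $(2)$: for type~1 there must be a tuple $C=(c_1,\dots,c_r)$ with $l_{ic_i}=1$ for all $i$ except at most one, which is precisely the statement that one exceptional index $i_0$ is allowed; for type~2, item~2.1 allows one exceptional index and item~2.2 allows two (the index $i_0$ where $\beta_{i_0}=0$, plus the ``at most one'' exception among the remaining indices), giving the ``at most two'' condition.

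For the ``if'' direction I would simply exhibit the derivations. If $m>0$, then ${\partial\over\partial S_1}$ is a nonzero $K_0$-homogeneous LND by the lemma preceding Theorem~\ref{MainTheorem}. If condition~$(2)$ holds in the type~1 case, the tuple $C$ supplied by the hypothesis satisfies exactly the requirement of item~1, so $\delta_C$ is a well-defined nonzero LND. In the type~2 case, if the two allowed exceptional indices $i_0,i_1$ can in fact be chosen so that only \emph{one} exception is needed, I take $\beta$ with all coordinates nonzero lying in the row space of $A$ (such $\beta$ exist since $A$ has rank two) and apply item~2.1; if two genuine exceptions are needed, I set $\beta_{i_1}=0$ and choose the remaining coordinates so that $\beta$ lies in the row space of $A$ and apply item~2.2. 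In every case $\delta_C$ or $\delta_{C,\beta}$ is nonzero because at least one $\delta_0(T_{ic_i})$ is a nonempty product of partial derivatives of monomials and hence nonzero.

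The main obstacle, and the only point requiring genuine care, is the bookkeeping matching condition~$(2)$ of the statement to the precise hypotheses of Construction~\ref{ConstrDer}, item~2. The subtlety is that item~2.2 permits \emph{both} an index $i_0$ with $\beta_{i_0}=0$ \emph{and} one further index among $i\neq i_0$ with $l_{ic_i}>1$, so the two exceptional indices $i_0,i_1$ in the theorem are not symmetric: one comes from a vanishing coordinate of $\beta$ and the other from a non-unit exponent. I would therefore verify, in the ``if'' direction, that a vector $\beta$ in the row space of $A$ with a prescribed single zero coordinate always exists (again using $\operatorname{rank}A=2$ and the pairwise linear independence of its columns, which guarantees no forced coincidental vanishing), so that the combinatorial data furnished by condition~$(2)$ can genuinely be realized by one of the two sub-cases of item~2.
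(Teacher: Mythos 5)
Your proposal is correct and follows essentially the same route as the paper: reduce via Theorems~\ref{Type1Theorem} and~\ref{Type2Theorem} to the existence of the basic derivations of Construction~\ref{ConstrDer}, then match their defining hypotheses to conditions $(1)$ and $(2)$. The only difference is one of detail: the paper dismisses the matching as ``evident,'' whereas you explicitly verify the existence of a suitable vector $\beta$ in the row space of $A$ (with all coordinates nonzero, or with exactly one prescribed zero coordinate) using $\operatorname{rank} A = 2$ and the pairwise linear independence of the columns --- a worthwhile clarification, but not a different proof.
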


\begin{proof}
    According to Theorems~\ref{Type1Theorem} and \ref{Type2Theorem}, any nonzero $K_0$-homogeneous locally nilpotent derivation corresponds to a derivation described in Construction~\ref{ConstrDer}. The derivations ${\partial\over\partial S_p}$ exist if and only if $m>0$. The existence of derivations $\delta_C$ and $\delta_{C,\beta}$ is equivalent to the existence of tuples $C$ satisfying the conditions of Construction~\ref{ConstrDer}. Evidently, these conditions are exactly the conditions~(2) of the theorem.
\end{proof}

As an application of out results, we describe trinomial algebras that have non-zero locally nilpotent derivations but have no $K_0$-homogeneous derivations.

Denote the additive group of the ground field $\mathbb K$ by $\mathbb G_a$. In geometric terms, any locally nilpotent derivation of a finitely generated $\mathbb K$-domain $R$ corresponds to a $\mathbb G_a$-action on the affine variety $X=\Spec R$. An affine variety $X$ (and the algebra~$\mathbb K[X]$) is called \textit{rigid} if~$X$ admits no nontrivial $\mathbb G_a$-action, or, equivalently, $\mathbb K[X]$ admits no non-zero locally nilpotent derivation. In~\cite[Theorem~4]{EGSh}, a necessary and sufficient condition for a trinomial variety to be rigid is developed.

The $K_0$-grading defined in Construction~\ref{ConstrR} corresponds to the effective action of the quasitorus $H=\Hom_{\mathbb Z}(K_0, \mathbb K^\times)$ on $R(A, P_0)$ given by $hF = h(w)F$ for $h\in H, \, F\in R(A, P_0)_w,\, w\in K_0$. This action naturally induces an action on the trinomial variety $X = \Spec R(A, P_0)$, and $K_0$-homogeneous locally nilpotent derivations of $R(A, P_0)$ stay in one-to-one correspondence with $\mathbb G_a$-actions on $X$ that are normalized by $H$ in $\Aut(X)$.

%In the similar way, the neutral component $T = H^0$ is a torus acting on $X$, and this action corresponds to the graging on trinomial algebra of the maximal torsion free subgroup of $K_0$. 

Recall that a totally ordered abelian group is an abelian group $K$ endowed with a total order~$\le$ that is translation invariant:
$a\le b$ implies $a+c\le b+c$ for all $a, b, c\in K$; see~\cite[Chapter~1]{Freu}. In this case, any locally nilpotent derivation $\delta$ of a $K$-graded finitely generated $\mathbb K$-domain $R$ has a decomposition onto a sum of $K$-homogeneous derivations $$\delta = \sum_{i=1}^{s} \delta_i, \qquad \deg\delta_i = w_i,\qquad w_1<\ldots< w_s.$$ Moreover, the extreme summands $\delta_1$ and $\delta_s$ are also locally nilpotent. It follows that $R$ is rigid if and only if it admits no non-zero $K$-homogeneous derivations.

However, it is well-known that a finitely generated abelian group can be totally ordered if and only if it is torsion-free. We have seen in Example~\ref{ExampleVar} that in the case of the finest grading of a trinomial algebra, $K_0$ may not be a free abelian group. Thus, it is a natural question to ask when a trinomial algebra is not rigid and has no $K_0$-homogeneous locally nilpotent derivation, i.e., admits no $H$-normalized $\mathbb G_a$-actions. The answer is provided by the following result. It follows directly from Theorem~\ref{ExistenсeTheorem} compared to the rigidity criterion \cite[Theorem 4]{EGSh}.

%\begin{theorem}\label{RigidityTheorem}
    %Пусть $R(A, P_0)$ имеет тип $1$. Тогда $R(A, P_0)$ не является жёсткой, если и только если выполнено одно из следующих условий:
    %\begin{enumerate}
       % \item[(1)] $m>0$;
        %\item[(2)] Существует такой индекс $i_0\in\{ 1,\ldots,r \}$, что для всех $i\in\{ 1,\ldots,r \}\setminus\{i_0\}$ найдутся $c_i\in\{1,\ldots,n_i\}$ с условием $l_{ic_i}=1$;
        
    %\end{enumerate}
    %Пусть $R(A, P_0)$ имеет тип $2$. Тогда $R(A, P_0)$ не является жёсткой, если и только если выполнено одно из следующих условий:
    %\begin{enumerate}
    %    \item[(1)] $m>0$;
    %    \item[(2)] Существует не более двух таких индексов $i_0,\,i_1\in\{ 0,\ldots,r \}$, что для всех $i\in\{0,\ldots,r \}\setminus\{i_0, i_1\}$ найдутся $c_i\in\{1,\ldots,n_i\}$ с условием $l_{ic_i}=1$.
       % \item[(3)] Существуют ровно три таких индекса $i_0, i_1, i_2\in\{0,\ldots,r\}$, что для всех $i\in\{0,\ldots,r\}\setminus\{i_0,i_1,i_2\}$ найдутся $c_i\in\{1\ldots,n_i\}$ c условием $\l_{ic_i}=1$. Кроме того, для $i\in\{i_0,i_1\}$ найдутся $c_i\in\{1,\ldots,n_i\}$ с условием $l_{ic_i}=2$ и степени $l_{ij}$ чётны для всех $j\in\{1,\ldots,n_i\}$. 
    %\end{enumerate}
%\end{theorem}
\begin{cor}\label{Cor1}
    A trinomial algebra $R(A, P_0)$ is not rigid and admits no nontrivial $H$-normalized $\mathbb G_a$-action if and only if it is of type~2 and both the following conditions are fulfilled: 
    \begin{enumerate}
        \item[$(1)$] $m=0$;
        \item[$(2)$] There exist exactly three numbers $i_0, i_1, i_2\in\{0,\ldots,r\}$ such that for all $i\in\{0,\ldots,r\}\setminus\{i_0,i_1,i_2\}$ there are $c_i\in\{1\ldots,n_i\}$ with $\l_{ic_i}=1$. Furthermore, for all $i\in\{i_0,i_1\}$ there exist $c_i\in\{1,\ldots,n_i\}$ with $l_{ic_i}=2$; the exponents $l_{ij}$ are even for all $i\in\{i_0,i_1\},\,j\in\{1,\ldots,n_i\}$; and $l_{i_2 j}>1$ for all $j\in\{1,\ldots,n_{i_2}\}$.
    \end{enumerate}
\end{cor}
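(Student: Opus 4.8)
The plan is to read both defining properties off as explicit conditions on $m$ and the exponents $l_{ij}$ and then intersect the two resulting lists. For the first property, I would use the correspondence recalled just before the statement: ``admits no nontrivial $H$-normalized $\mathbb G_a$-action'' means exactly ``$R(A,P_0)$ carries no nonzero $K_0$-homogeneous locally nilpotent derivation''. By Theorem~\ref{ExistenсeTheorem} this is the negation of the existence conditions there, so it is equivalent to $m=0$ together with the following numerical requirement. Call an index $i$ \emph{saturated} if $l_{ij}>1$ for every $j$ (that is, the monomial $T_i^{l_i}$ carries no variable of exponent~$1$). Then the failure of item~$(2)$ of Theorem~\ref{ExistenсeTheorem} says: in type~$1$ there are at least two saturated indices, and in type~$2$ there are at least three. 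Thus the absence of an $H$-normalized action is equivalent to $m=0$ plus ``at least two (type~$1$) resp.\ at least three (type~$2$) saturated indices''.

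For the second property I would invoke the rigidity criterion of~\cite[Theorem~4]{EGSh}, which characterizes non-rigidity of a trinomial variety through the same combinatorial data. The heart of the proof is to compare the two lists inside the regime just isolated. First I would eliminate type~$1$: once $m=0$ and two indices are saturated, \cite[Theorem~4]{EGSh} gives rigidity, so no type~$1$ algebra can satisfy both properties simultaneously; informally, factoring the two saturated monomials turns the defining relation into a torus-type identity admitting no $\mathbb G_a$-action. This leaves type~$2$ with $m=0$ and at least three saturated indices, and here I would check that four or more saturated indices again forces rigidity by~\cite[Theorem~4]{EGSh}. Hence the saturated set must consist of exactly three indices $\{i_0,i_1,i_2\}$, which is precisely the opening requirement of condition~$(2)$ that every remaining index carries a variable with exponent~$1$.

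It then remains to match, among type~$2$ algebras with $m=0$ and exactly three saturated indices, the non-rigidity condition of~\cite[Theorem~4]{EGSh} with the stated parity requirements. The decisive mechanism is that non-rigidity in this range comes from factoring the sum of two of the three saturated monomials as a product of two monomials and passing to a Danielewski-type relation $uv+T_{i_2}^{l_{i_2}}=0$ (compare Remark~\ref{DimRemark}, which lets one rewrite the relation in terms of any two of the monomials). For $T_{i_0}^{l_{i_0}}=P^{2}$ and $T_{i_1}^{l_{i_1}}=Q^{2}$ to be perfect squares of monomials $P,Q$ one needs all $l_{ij}$ with $i\in\{i_0,i_1\}$ even, while an exponent equal to~$2$ guarantees that some variable occurs linearly in $P$ (resp.\ $Q$), which is what allows the Danielewski derivation to be lifted back to the original coordinates; the third saturated monomial $T_{i_2}^{l_{i_2}}$ plays the role of the base polynomial and is only constrained by $l_{i_2 j}>1$. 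I expect the main obstacle to be exactly this last matching: showing that evenness of all $l_{ij}$ for $i\in\{i_0,i_1\}$ together with a single exponent~$2$ is both necessary and sufficient for non-rigidity, and that the remaining saturated monomial may be arbitrary subject to $l_{i_2 j}>1$, reproduces verbatim the conditions extracted from~\cite[Theorem~4]{EGSh}.
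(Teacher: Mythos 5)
Your proposal is correct and follows essentially the same route as the paper: the paper's entire proof is the observation that the statement follows by comparing the existence criterion for $K_0$-homogeneous locally nilpotent derivations (Theorem~4) with the rigidity criterion of \cite[Theorem~4]{EGSh}, which is exactly your negate-and-intersect scheme. Your additional details (the count of ``saturated'' monomials from negating the existence theorem, and the Danielewski-type mechanism behind the parity conditions) are consistent with, and merely elaborate on, this one-line comparison.
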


\section{Kernels of derivations from Construction~\ref{ConstrDer}}
In this section, we describe elements of the kernel of a derivation defined in Construction~\ref{ConstrDer}. Together with Theorems~\ref{Type1Theorem} and~\ref{Type2Theorem}, the following propositions give us an explicit description of all $K_0$-homogeneous locally nilpotent derivations of a trinomial algebra.

\begin{stat}\label{KerStat1}
    Suppose that $R(A, P_0)$ is of type~1 and $\delta$ a locally nilpotent derivation of $R(A, P_0)$ defined in Construction~\ref{ConstrDer}.
    
    Let $\delta = {\partial\over\partial S_p}$. Then $h$ is a $K_0$-homogeneous element of $\Ker\delta$ if and only if
    $$
    h = F(T_1^{l_1}) \Prod T_{ij}^{b_{ij}}\Prod_{k\neq p} S_k^{d_k}. 
    $$
    for some $b_{ij}, d_k\in\mathbb Z_{\ge 0}$ and some polynomial $F$ in one variable.

    Let $\delta = \delta_C$. Then $h$ is a $K_0$-homogeneous element of $\Ker\delta$ if and only if
    $$
    h = \alpha\Prod_{T_{ij}\in\Ker\delta} T_{ij}^{b_{ij}}\Prod S_k^{d_k} 
    $$
    for some $b_{ij}, d_k\in\mathbb Z_{\ge 0}$, $\alpha\in\mathbb K$.
\end{stat}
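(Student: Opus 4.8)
The plan is to combine three ingredients that are already available: the normal form for homogeneous elements from Lemma~\ref{KerLem1}, the factorial closedness of kernels of locally nilpotent derivations (Lemma~\ref{BasicLem}(ii)), and the fact that the variables are $K_0$-primes in the $K_0$-factorial algebra $R(A,P_0)$ (Proposition~\ref{PropHausen}). In both cases the ``if'' direction is immediate: the kernel of a derivation is a subalgebra, and every factor occurring in the proposed expression lies in the relevant kernel. Indeed each $T_{ij}$ with $j\neq c_i$ and each $S_k$ with $k\neq p$ is annihilated by construction, while $F(T_1^{l_1})$ lies in $\Ker(\partial/\partial S_p)$ because it involves no $S_k$; hence their product lies in the kernel as well. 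So I would concentrate on the ``only if'' direction.

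For the ``only if'' direction I would take a $K_0$-homogeneous $h\in\Ker\delta$ and write it, via Lemma~\ref{KerLem1}, as $h=F(T_1^{l_1})\,M$ with $M=\Prod T_{ij}^{b_{ij}}\Prod S_k^{d_k}$. Since $M$ is a genuine factor of $h$ in the domain $R(A,P_0)$, factorial closedness gives $M\in\Ker\delta$, and applying factorial closedness once more to the prime factorization of $M$ shows that every variable actually occurring in $M$ lies in $\Ker\delta$. For $\delta=\partial/\partial S_p$ this forces $d_p=0$ (as $S_p\notin\Ker(\partial/\partial S_p)$) while leaving everything else unconstrained, which is exactly the asserted form. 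For $\delta=\delta_C$ the same step forces $b_{ic_i}=0$ for every $i$, because $\delta_C(T_{ic_i})=\Prod_{k\neq i}\partial T_k^{l_k}/\partial T_{kc_k}\neq0$ shows $T_{ic_i}\notin\Ker\delta_C$; hence $M$ involves only the kernel variables $T_{ij}$ ($j\neq c_i$) and the $S_k$.

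It then remains, for $\delta_C$, to show that $F$ is constant. By factorial closedness the other factor $F(T_1^{l_1})$ also lies in $\Ker\delta_C$, so by the chain rule $0=\delta_C(F(T_1^{l_1}))=F'(T_1^{l_1})\,\delta_C(T_1^{l_1})$, where a direct computation gives $\delta_C(T_1^{l_1})=l_{1c_1}T_{1c_1}^{l_{1c_1}-1}\big(\Prod_{j\neq c_1}T_{1j}^{l_{1j}}\big)\Prod_{k\neq1}\partial T_k^{l_k}/\partial T_{kc_k}\neq0$. As $R(A,P_0)$ is a domain this yields $F'(T_1^{l_1})=0$. The point I would still need is that $T_1^{l_1}$ is transcendental over $\mathbb K$ in $R(A,P_0)$; it is a nonconstant regular function on the irreducible variety $X$, since one produces points of $X$ on which $T_1^{l_1}$ takes infinitely many values by setting $T_{1j}=1$ for $j\geq2$, letting $T_{11}$ vary, and solving each relation $\mathfrak g_i=0$ for a single variable in the remaining groups (possible because $\mathbb K$ is algebraically closed). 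Granting transcendence, $F'=0$ in $\mathbb K[u]$, so in characteristic zero $F=\alpha$ is constant and $h=\alpha M$ has the required form.

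I expect the transcendence of $T_1^{l_1}$ to be the only genuinely nonformal point; the rest is bookkeeping with factorial closedness and the normal form. If the geometric argument above feels unsatisfying, the alternative I would pursue is to show directly that $\mathfrak I\cap\mathbb K[T_{11},\dots,T_{1n_1}]=0$, so that $\mathbb K[T_{11},\dots,T_{1n_1}]$ embeds into $R(A,P_0)$ as a genuine polynomial ring in which the monomial $T_1^{l_1}$ is visibly transcendental; this is where I would spend the most care, probably via a monomial order separating the variable groups so that the $\mathfrak g_i$ have leading terms outside $\mathbb K[T_{11},\dots,T_{1n_1}]$.
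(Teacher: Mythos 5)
Your proof is correct and follows essentially the same route as the paper's: Lemma~\ref{KerLem1} for the normal form, factorial closedness of $\Ker\delta$ (Lemma~\ref{BasicLem}) to reduce membership to the individual factors, and the computation $\delta_C(F(T_1^{l_1}))=F'(T_1^{l_1})\,\delta_C(T_1^{l_1})$ to force $F$ to be constant. The transcendence of $T_1^{l_1}$ over $\mathbb K$, which you rightly flag and which the paper leaves implicit, needs no geometric point-construction or elimination argument: $T_1^{l_1}$ is a nonzero non-unit of the finitely generated $\mathbb K$-domain $R(A,P_0)$ (its factors $T_{1j}$ are $K_0$-primes by Proposition~\ref{PropHausen}), and in a $\mathbb K$-domain over an algebraically closed field any element algebraic over $\mathbb K$ already lies in $\mathbb K$, hence is a unit or zero.
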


\begin{proof}
    We begin with the `only if' part. According to Lemma~\ref{KerLem1}, any $K_0$-homogeneous element $h\in R(A, P_0)$ has the form
    $$
    h = F(T_1^{l_1})\Prod T_{ij}^{b_{ij}}\Prod S_k^{d_k} 
    $$
    for some $b_{ij}, d_k\in\mathbb Z_{\ge 0}$ and a polynomial $F$ in one variable.

    The kernel of a locally nilpotent derivation is factorially closed (Lemma~\ref{BasicLem}). Consequently, $h\in\Ker\delta$ if and only if $F(T_1^{l_1})\in\Ker\delta$, $T_{ij}\in\Ker\delta$ for $b_{ij}>0$, and $S_k\in\Ker\delta$ for $d_k>0$.

    If $\delta = {\partial\over\partial S_p}$, then $F(T_1^{l_1})\in\Ker\delta$ for any polynomial $F$. In the other case, if $\delta = \delta_C$, we have $\delta(F(T_1^{l_1})) = F'(T_1^{l_1})\delta(T_1^{l_1}) \neq 0$ for $F\notin\mathbb K$. Here $F'$ denotes the formal derivative of $F$.
    
    The `if' part is clear: letting $h$ as given in the formulation of the proposition, we see that all the factors belong to $\ker\delta$, so $h\in\ker\delta$.
\end{proof}

\begin{stat}\label{KerStat2}
    Suppose that $R(A, P_0)$ is of type~2 and $\delta$ a locally nilpotent derivation of $R(A, P_0)$ defined in Construction~\ref{ConstrDer}.
    
    Let $\delta = {\partial\over\partial S_p}$.  Then $h$ is a $K_0$-homogeneous element of $\Ker\delta$ if and only if 
    $$
    h = F(T_1^{l_1}, T_2^{l_2}) \Prod T_{ij}^{b_{ij}}\Prod_{k\neq p} S_k^{d_k}. 
    $$
    for some $b_{ij}, d_k\in\mathbb Z_{\ge 0}$ and some homogeneous polynomial $F$ in two variables.
    
    Let $\delta = \delta_{C, \beta}$. Then $h$ is a $K_0$-homogeneous element of $\Ker\delta$ if and only if
    $$    
        h = \alpha(\beta_2T_1^{l_1} - \beta_1 T_2^{l_2})^{m} \Prod_{T_{ij}\in\Ker\delta} T_{ij}^{b_{ij}} \Prod S_k^{d_k}     
    $$
    for some $m,b_{ij},d_k\in\mathbb Z_{\ge 0},\alpha\in\mathbb K$.
\end{stat}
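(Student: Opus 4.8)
The plan is to run the same two steps as in the proof of Proposition~\ref{KerStat1}, but using the two–variable normal form of Lemma~\ref{KerLem} in place of the one–variable one, and then to analyse the single genuinely new factor. First I would invoke Lemma~\ref{KerLem} to write an arbitrary $K_0$-homogeneous element as $h = F(T_1^{l_1},T_2^{l_2})\prod T_{ij}^{b_{ij}}\prod S_k^{d_k}$ with $F$ a homogeneous polynomial in two variables and $b_{ij},d_k\ge 0$. Since $\Ker\delta$ is factorially closed (Lemma~\ref{BasicLem}(ii)) and the variables $T_{ij},S_k$ are pairwise nonassociated $K_0$-primes (Proposition~\ref{PropHausen}), membership $h\in\Ker\delta$ is equivalent to the simultaneous membership of the factor $F(T_1^{l_1},T_2^{l_2})$ and of every variable actually occurring in the monomial part. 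This reduces both cases to (a) identifying which variables lie in $\Ker\delta$, and (b) deciding when $F(T_1^{l_1},T_2^{l_2})\in\Ker\delta$.

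For $\delta=\partial/\partial S_p$ step (a) is trivial: every $T_{ij}$ and every $S_k$ with $k\neq p$ lies in the kernel, while $S_p$ does not; and for step~(b) this operator annihilates each $T_i^{l_i}$, so $F(T_1^{l_1},T_2^{l_2})\in\Ker\delta$ for \emph{every} $F$. This already gives the asserted form. For $\delta=\delta_{C,\beta}$ step~(a) is again immediate from Construction~\ref{ConstrDer}: all $S_k\in\Ker\delta$, and the admissible $T_{ij}$ are precisely those indexed by $\prod_{T_{ij}\in\Ker\delta}$. Thus the whole content lies in step~(b) for the case $\delta=\delta_{C,\beta}$.

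The heart of the argument is the computation of $\delta$ on $F(T_1^{l_1},T_2^{l_2})$. Directly from Construction~\ref{ConstrDer} (as in the proof of Proposition~\ref{DimStat}) one sees that in both subcases~2.1 and~2.2 there is a single nonzero element $\Theta\in R(A,P_0)$, namely a product of the monomials $\partial T_k^{l_k}/\partial T_{kc_k}$, with $\delta(T_i^{l_i})=\beta_i\Theta$ for all $i$. The chain rule then gives
\[
\delta\bigl(F(T_1^{l_1},T_2^{l_2})\bigr)=\Theta\cdot\bigl(\beta_1 F_u+\beta_2 F_v\bigr)(T_1^{l_1},T_2^{l_2}),
\]
where $F_u,F_v$ are the formal partial derivatives. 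As $R(A,P_0)$ is a domain and $\Theta\neq0$, this vanishes iff $(\beta_1 F_u+\beta_2 F_v)(T_1^{l_1},T_2^{l_2})=0$ in $R(A,P_0)$. To convert this relation in $R(A,P_0)$ into a polynomial identity I would use Lemma~\ref{QuotLem}: it identifies $\Quot(R(A,P_0))_0$ with the rational function field $\mathbb K(T_1^{l_1}/T_2^{l_2})$, so $t=T_1^{l_1}/T_2^{l_2}$ is transcendental over $\mathbb K$. Writing a homogeneous $G$ of degree $d$ as $G(T_1^{l_1},T_2^{l_2})=(T_2^{l_2})^{d}G(t,1)$ and using that $T_2^{l_2}\neq0$, one gets that the substitution $G\mapsto G(T_1^{l_1},T_2^{l_2})$ is injective on homogeneous polynomials. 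Since $\beta_1 F_u+\beta_2 F_v$ is homogeneous, the displayed vanishing is therefore equivalent to $\beta_1 F_u+\beta_2 F_v=0$ in $\mathbb K[u,v]$.

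It then remains to solve this constant–coefficient equation. The linear form $\ell=\beta_2 u-\beta_1 v$ satisfies $(\beta_1\partial_u+\beta_2\partial_v)\ell=0$, and since $(\beta_1,\beta_2)\neq0$ the kernel of the nonzero derivation $\beta_1\partial_u+\beta_2\partial_v$ on $\mathbb K[u,v]$ equals $\mathbb K[\ell]$; its homogeneous part of degree $d$ is the line $\mathbb K\,\ell^{\,d}$. Hence $F=\alpha(\beta_2 u-\beta_1 v)^{d}$, producing the factor $\alpha(\beta_2 T_1^{l_1}-\beta_1 T_2^{l_2})^{m}$ with $m=\deg F$. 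Combining with step~(a) yields the claimed form, and the `if' direction is immediate: each displayed factor lies in $\Ker\delta$ (for the linear combination one computes $\delta(\beta_2 T_1^{l_1}-\beta_1 T_2^{l_2})=(\beta_2\beta_1-\beta_1\beta_2)\Theta=0$) and $\Ker\delta$ is a subalgebra. I expect the main obstacle to be precisely the passage from the relation $(\beta_1 F_u+\beta_2 F_v)(T_1^{l_1},T_2^{l_2})=0$ in $R(A,P_0)$ to the polynomial identity $\beta_1 F_u+\beta_2 F_v=0$: it is here that the algebraic independence of $T_1^{l_1}$ and $T_2^{l_2}$ extracted from Lemma~\ref{QuotLem} is indispensable, since otherwise one cannot exclude spurious relations coming from the defining ideal~$\mathfrak I$.
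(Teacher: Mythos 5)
Your proof is correct, but at the decisive step it takes a genuinely different route from the paper. The paper never differentiates $F$: since $\mathbb K$ is algebraically closed, it factors $F(T_1^{l_1},T_2^{l_2})$ into linear forms $\prod_q(\alpha_{q1}T_1^{l_1}+\alpha_{q2}T_2^{l_2})$, applies factorial closedness of $\Ker\delta$ (Lemma~\ref{BasicLem}) to put each linear factor in the kernel, and then computes directly $0=\delta(\alpha_{q1}T_1^{l_1}+\alpha_{q2}T_2^{l_2})=(\alpha_{q1}\beta_1+\alpha_{q2}\beta_2)\,\Theta$, forcing each factor to be proportional to $\beta_2T_1^{l_1}-\beta_1T_2^{l_2}$; this stays entirely inside $R(A,P_0)$ and never needs any algebraic-independence statement. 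You instead use the chain rule to reduce to the constant-coefficient equation $\beta_1F_u+\beta_2F_v=0$ in $\mathbb K[u,v]$ and solve it (its homogeneous solutions are indeed $\alpha(\beta_2u-\beta_1v)^d$), which is conceptually clean and does not require factoring $F$, but it shifts the whole weight onto the injectivity of the substitution $u\mapsto T_1^{l_1}$, $v\mapsto T_2^{l_2}$ on homogeneous polynomials. Your justification of that injectivity is the one soft spot: Lemma~\ref{QuotLem} as stated only says that $\Quot(R(A,P_0))_0$ is \emph{generated} by $t=T_1^{l_1}/T_2^{l_2}$, which by itself does not exclude $t\in\mathbb K$ (i.e.\ $\Quot(R(A,P_0))_0=\mathbb K$); note that the paper itself, in the proof of Proposition~\ref{forDimStat}, establishes transcendence of such a ratio by a separate surjectivity argument rather than reading it off Lemma~\ref{QuotLem}. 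The gap closes in one line from Proposition~\ref{PropHausen}: $T_1^{l_1}$ and $T_2^{l_2}$ are products of nonassociated $K_0$-primes, so no nontrivial combination $\alpha_1T_1^{l_1}+\alpha_2T_2^{l_2}$ can vanish in the domain $R(A,P_0)$, which gives exactly the injectivity (equivalently, the transcendence of $t$) you need. With that patch your argument is complete; your treatment of the case $\delta=\partial/\partial S_p$ and of the monomial factors via factorial closedness coincides with the paper's.
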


\begin{proof}
    According to Lemma~\ref{KerLem}, any $K_0$-homogeneous element of $h\in R(A, P_0)$ has the form
    $$
    h = F(T_1^{l_1}, T_2^{l_2})\Prod S_k^{d_k}\Prod T_{ij}^{b_{ij}}.
    $$ 
    
    In the case of $\delta = {\partial\over\partial S_p}$, the proof is the same as for the corresponding case of Proposotion~\ref{KerStat1}.
    
    Let $\delta = \delta_{C, \beta}$ and $h\in\Ker\delta$ be a $K_0$-homogeneous element. As the field $\mathbb K$ is algebraically closed, any homogeneous polynomial $F$ in two variables has the form $F(X_1,X_2) = \Prod\limits_{q=1}^t (\alpha_{q1}X_1 + \alpha_{q2} X_2)$ for some $t\in\mathbb Z_{\ge0}$, $\alpha_{q1},\alpha_{q2} \in\mathbb K$. Hence, 
$$
h = \Prod\limits_{q=1}^t (\alpha_{q1}T_1^{l_1} + \alpha_{q2} T_2^{l_2}) \Prod_k S_k^{d_k}\Prod_{(i,j)} T_{ij}^{b_{ij}}.
$$ 
The kernel of a locally nilpotent derivation is factorially closed by Lemma~\ref{BasicLem}. Thus we obtain  $S_k,T_{ij}\in\Ker\delta$ for $d_k> 0$ and $b_{ij}> 0$ respectively, and $\alpha_{q1}T_1^{l_1} + \alpha_{q2} T_2^{l_2}\in\Ker\delta$ for all~$q$. Recall that $\delta_{C, \beta}$ may have slightly different forms described in items~2.1 and~2.2 of Construction~\ref{ConstrDer}. We have
$$
0 = \delta(\alpha_{q1}T_1^{l_1} + \alpha_{q2} T_2^{l_2}) = 
	\begin{cases}
		(\alpha_{q1}\beta_1 + \alpha_{q2}\beta_2)\Prod\limits_k{\partial T_k^{l_k}\over\partial T_{kc_k}} &\text{for item~2.1},\\
		(\alpha_{q1}\beta_1 + \alpha_{q2}\beta_2)\Prod\limits_{k\neq i_0}{\partial T_k^{l_k}\over\partial T_{kc_k}} &\text{for item~2.2}.
	\end{cases}
$$
Therefore, $\alpha_{q1}\beta_1 + \alpha_{q2}\beta_2=0$. Hence, the vectors $(\alpha_{q1}, \alpha_{q2})$ and $(-\beta_2, \beta_1)$ are proportional and~$h$ has the required form.

The `if' part is the same as the one of Proposition~\ref{KerStat1}.
\end{proof}

%Connecting Propositions~\ref{KerStat1} and~\ref{KerStat2} with Theorems~\ref{Type1Theorem} and~\ref{Type2Theorem}, we obtain the following result.

\begin{cor}\label{Cor2}
    Let $\widehat\delta$ be a $K_0$-homogeneous locally nilpotent derivation of a trinomial algebra. Then $\widehat\delta = h\delta$ where $\delta$ is a derivation defined in Construction~\ref{ConstrDer}, and $h$ has the form given by Propositions~\ref{KerStat1} and~\ref{KerStat2}.
\end{cor}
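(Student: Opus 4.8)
The plan is to assemble this statement directly from results already established, as it merely packages the two main theorems together with the explicit kernel computations. First I would invoke Theorem~\ref{MainTheorem}---realized as Theorem~\ref{Type1Theorem} in the type~1 case and Theorem~\ref{Type2Theorem} in the type~2 case---to conclude that the given $K_0$-homogeneous locally nilpotent derivation $\widehat\delta$ is elementary.

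Next I would unpack the definition of elementary. By that definition, either $\widehat\delta=0$ or $\widehat\delta=h\delta$, where $\delta$ is one of the derivations ${\partial\over\partial S_p}$, $\delta_C$, or $\delta_{C,\beta}$ from Construction~\ref{ConstrDer}, and $h\in\Ker\delta$ is $K_0$-homogeneous. The degenerate case $\widehat\delta=0$ is subsumed by taking $h=0$ (for instance $\alpha=0$ or $F=0$ in the kernel formulas), so it requires no separate treatment.

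Finally, I would apply the explicit kernel descriptions: Proposition~\ref{KerStat1} characterizes the $K_0$-homogeneous elements of $\Ker\delta$ for the type~1 derivations, and Proposition~\ref{KerStat2} does the same for type~2, the case $\delta=\partial/\partial S_p$ being common to both. Matching each possible $\delta$ with its corresponding kernel proposition yields exactly the asserted form for~$h$. Since the argument only recombines cited results, I do not expect a genuine obstacle; the sole point demanding attention is the bookkeeping of pairing each derivation type from Construction~\ref{ConstrDer} with the correct kernel proposition and confirming that the zero derivation is covered.
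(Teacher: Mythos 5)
Your proposal is correct and follows essentially the same route as the paper, which treats Corollary~\ref{Cor2} as an immediate assembly of Theorems~\ref{Type1Theorem} and~\ref{Type2Theorem} (reduction to elementary derivations) with the kernel descriptions in Propositions~\ref{KerStat1} and~\ref{KerStat2}. Your handling of the degenerate case $\widehat\delta=0$ via $h=0$ is a reasonable way to make explicit a point the paper leaves implicit.
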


\end{document}